\numberwithin{equation}{section}
\theoremstyle{plain}
\newtheorem{theorem}{Theorem}
\newtheorem{lemma}[theorem]{Lemma}
\newtheorem{corollary}[theorem]{Corollary}
\newtheorem{proposition}[theorem]{Proposition}
\theoremstyle{definition}
\newtheorem{definition}[theorem]{Definition}
\newtheorem{remark}[theorem]{Remark}
\newcommand{\Hs}{\mathcal H}
\newcommand{\Ks}{\mathcal K}
\newcommand{\cb}{\mathit{cb}}
\newcommand{\N}{\mathcal N} % Used for normaliser
\newcommand{\eps}{\varepsilon}
\newcommand{\vp}{\varepsilon}
\newcommand{\puk}{\mathrm{Puk}}
\newcommand{\B}{\mathcal B}
\newcommand{\vnotimes}{\,\overline{\otimes}\,}
\numberwithin{theorem}{section}
\begin{document}

\title{Structural properties of close II$_1$ factors}

\author[Cameron et. al.]{Jan Cameron}
\address{\hskip-\parindent
Jan Cameron, Department of Mathematics, Vassar College, Poughkeepsie, NY 12604, 
U.S.A.}
\email{jacameron@vassar.edu}

\author[]{Erik Christensen}
\address{\hskip-\parindent
Erik Christensen, Institute for Mathematiske Fag, University of Copenhagen,
Copenhagen, Denmark.}
\email{echris@math.ku.dk}
\author[]{Allan M.~Sinclair}
\address{\hskip-\parindent
Allan M.~Sinclair, School of Mathematics, University of Edinburgh, JCMB, King's
Buildings, Mayfield Road, Edinburgh, EH9 3JZ, Scotland.}
\email{allan.m.sinclair@gmail.com}
\author[]{Roger R.~Smith}
\address{\hskip-\parindent
Roger R.~Smith, Department of Mathematics, Texas A{\&}M University,
College Station, TX 77843,  U.S.A.}
\email{rsmith@math.tamu.edu}

\author[]{Stuart White}
\address{\hskip-\parindent
Stuart White, School of Mathematics and Statistics, University of Glasgow, 
University Gardens, Glasgow Q12 8QW, Scotland.}
\email{stuart.white@glasgow.ac.uk}

\author[]{Alan D.~Wiggins}
\address{\hskip-\parindent
Alan D.~Wiggins, Department of Mathematics and Statistics, University of
Michigan-Dearborn,  Dearborn, MI 48126, U.S.A.}
\email{adwiggin@umd.umich.edu}
\thanks{The research leading to this paper was supported by an AMS-Simons research travel grant (JC), NSF grant DMS-1101403 (RRS), and EPSRC grant EP/IO19227/1 (SW)}

\begin{abstract}
We show that a number of key structural properties transfer between sufficiently close II$_1$ factors, including solidity, strong solidity, uniqueness of Cartan masas and property $\Gamma$. We also examine II$_1$ factors close to tensor product factors, showing that such factors also factorise as a tensor product in a fashion close to the original.
\end{abstract}

\maketitle

\section{Introduction}

In \cite{KK:AJM}, Kadison and Kastler equipped the collection of all operator algebras acting on a Hilbert space with a metric which measures how close the unit balls of two algebras are in operator norm.  Using the operator norm in this fashion makes closeness a very strong condition on a pair of operator algebras, leading Kadison and Kastler to conjecture that sufficiently close algebras should be spatially isomorphic.  Strong results for amenable von Neumann algebras were obtained in the late 1970's in \cite{C:Invent,RT:JFA,C:Acta}: sufficiently close amenable von Neumann algebras must arise from small unitary perturbations. A few years ago corresponding  results for separable nuclear $\mathrm{C}^*$-algebras were obtained in \cite{CSSWW:Acta} (examples of Johnson from \cite{Joh:CMB} show that one can only expect a small unitary perturbation in the point norm topology in the $\mathrm{C}^*$-setting). In \cite{CCSSWW:Duke} we examined nonamenable algebras, providing the first nonamenable von Neumann algebras which satisfy the Kadison-Kastler conjecture (an expository account of this work can be found in \cite{CCSSWW:PNAS}).

The driving theme of this paper is the transfer of structural properties between close von Neumann algebras. This was the focus of the original paper \cite{KK:AJM}, which shows that close von Neumann algebras $M$ and $N$ have the same nonzero summands in their type decomposition, and further that the corresponding summands are again close.  Subsequently close $\mathrm{C}^*$-algebras were shown to have isomorphic ideal lattices (and correspondingly close ideals) by Phillips in \cite{P:IUMJ}, and $\mathrm{C}^*$-algebras which remain close under all matrix amplifications were shown to have isomorphic $K$-theories by Khoshkam in \cite{Kh:JOT}. Recently questions of this nature have been explored for more refined $\mathrm{C}^*$-algebra invariants in \cite{CSSW:GAFA} (which demonstrates a strong connection between close operator algebras and Kadison's similarity problem from \cite{K:AJM}, which the authors extended in \cite{CCSSWW:InPrep}) and \cite{PTWW:APDE}.  In this paper we turn to von Neumann algebras, and more precisely II$_1$ factors, showing how the methods developed in \cite{CCSSWW:Duke} can be used to examine properties such as (strong) solidity \cite{O:Acta,OP:Ann} and uniqueness of Cartan masas \cite{OP:Ann}, which have come to the forefront as part of the revolutionary progress in the structure theory of II$_1$ factors  made over the last fifteen years.  We also consider Murray and von Neumann's property $\Gamma$ and tensorial decompositions, transferring these properties to sufficiently close factors, and examine the structure of masas within close factors.

Before proceeding, we recall the definitions of the Kadison-Kastler metric and the closely related notion of near containments from \cite{KK:AJM} and \cite{C:Acta} respectively.  Note that the metric is not quite obtained from symmetrising the notion of near inclusion.

\begin{definition}[Kadison-Kastler, Christensen]
Let $M$ and $N$ be von Neumann algebras acting nondegenerately on a Hilbert space $\Hs$.  The distance, $d(M,N)$ is the infimum of those $\gamma>0$ such that for every operator $x$ in the unit ball of one algebra, there exists $y$ in the unit ball of the other algebra with $\|x-y\|<\gamma$.  A near containment $M\subseteq_\gamma N$ arises when for every $x\in M$, there exists $y\in N$ with $\|x-y\|\leq\gamma\|x\|$.  Write $M\subset_\gamma N$ when there exists $\gamma'<\gamma$ with $M\subseteq_{\gamma'} N$. 
\end{definition}

We note that there is no assumption that $\|y\|\leq \|x\|$ in the definition of a near containment. Consequently, the composition of near containments $P\subset_\alpha Q \subset_\beta R$ becomes 
\begin{equation}\label{containeq}
P\subset_{(\alpha +\beta +\alpha \beta)} R,
\end{equation}
easily obtained from the triangle inequality.

It is also natural to consider `completely bounded' versions of the above notions.  Let $d_{\cb}(M,N)=\sup_nd(M\otimes\mathbb M_n,N\otimes\mathbb M_n)$, where one measures the distance between $M\otimes\mathbb M_n$ and $N\otimes\mathbb M_n$ on $\Hs\otimes\mathbb C^n$.  Similarly, write $M\subset_{\cb,\gamma}N$ when $M\otimes\mathbb M_n\subset_{\gamma}N\otimes\mathbb M_n$ for all $n\in\mathbb N$.

A key tool in the study of close von Neumann algebras is the embedding theorem for a near containment of an amenable von Neumann algebra from \cite[Theorem 4.3, Corollary 4.4]{C:Acta}. This is used repeatedly in this paper and so we recall the statement here for the reader's convenience.

\begin{theorem}[Christensen]\label{Injective} Let $P$ be an amenable von Neumann subalgebra of $\mathcal B(\Hs)$ containing
$I_\Hs$. Suppose that
$B$ is another von Neumann subalgebra of $\mathcal B(\Hs)$ and $P\subset_\gamma
B$ for a constant $\gamma<1/100$.  Then there exists a unitary $u\in (P\cup
B)''$ with $uPu^*\subseteq B$, $\|I_\Hs-u\|\leq150\gamma$ and
$\|uxu^*-x\|\leq100\gamma\|x\|$
for $x\in P$. If, in addition, $\gamma<1/101$ and $B\subset_\gamma P$, then
$uPu^*=B$.
\end{theorem}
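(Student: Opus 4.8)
The plan, following Christensen, is to exploit the amenability of $P$ through an averaging argument that upgrades the \emph{approximate} inclusion encoded by $P\subset_\gamma B$ into an \emph{exact} one implemented by a unitary close to $I_\Hs$. The single structural input is that an amenable (equivalently injective) $P$ carries an invariant mean on its unitary group $\mathcal U(P)$, and correspondingly that every bounded derivation of $P$ into a dual normal $P$-bimodule is inner with quantitative control on the implementing element. All of the constructions below take place inside $(P\cup B)''$, so the conjugating unitary will automatically lie there.

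First I would record the data of the near containment: choosing $\gamma'<\gamma$ with $P\subseteq_{\gamma'}B$, for each $v\in\mathcal U(P)$ fix $y_v\in B$ with $\|v-y_v\|\le\gamma'$, so that $\|y_vv^*-I_\Hs\|=\|(y_v-v)v^*\|\le\gamma'$. Averaging the bounded family $\{y_vv^*\}$ against the invariant mean (a weak-$*$ limit in $(P\cup B)''$) produces an element $a\in(P\cup B)''$ with $\|a-I_\Hs\|\le\gamma'<1$ and, crucially, an \emph{exact} intertwining relation $ax=\theta(x)a$ for all $x\in P$, where $\theta(x)\in B$; here the invariance of the mean is exactly what cancels the first-order error and forces exactness. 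In particular $a$ is invertible. I would then pass to the polar decomposition $a=u|a|$: since $\|a-I_\Hs\|<1$ the partial isometry $u$ is a genuine unitary in $(P\cup B)''$, and a short computation bounds $\|I_\Hs-u\|$ by a constant multiple of $\gamma$. The delicate point is to arrange the averaging so that $|a|$ commutes with $P$; granting $|a|\in P'$, one computes $uxu^*=axa^{-1}=\theta(x)\in B$ for every $x\in P$, giving $uPu^*\subseteq B$, and propagating the estimate $\|a-I_\Hs\|\le\gamma'$ through $\Ad(u)$ yields $\|uxu^*-x\|\le100\gamma\|x\|$.

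The step I expect to be the main obstacle is the quantitative bookkeeping. The qualitative existence of a conjugating unitary is the soft part; extracting the explicit bounds $\|I_\Hs-u\|\le150\gamma$ and $\|uxu^*-x\|\le100\gamma\|x\|$, and identifying the threshold $\gamma<1/100$ below which the whole scheme closes up (the averaged element must stay invertible and the derivation-implementing estimate must beat $1$), requires care at each inequality. One must also verify the weak-$*$ continuity needed to place $a$ genuinely in $(P\cup B)''$ rather than merely in a bidual, and confirm that the positive part $|a|$ really can be taken in $P'$ so that the unitary part, and not just $a$ itself, effects the conjugation into $B$.

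Finally, for the equality assertion under the stronger hypotheses $\gamma<1/101$ and $B\subset_\gamma P$, I would deduce $uPu^*=B$ from a reverse near containment together with an iteration. Conjugating $B\subset_\gamma P$ by the unitary $u$ (which is within $150\gamma$ of $I_\Hs$) and composing near containments via \eqref{containeq}, while absorbing the perturbation coming from $u$, produces a reverse estimate $B\subseteq_\delta uPu^*$; the sharper hypothesis $\gamma<1/101$ is precisely what keeps the resulting constant $\delta<1$. Once $\delta<1$, the inclusion $uPu^*\subseteq B$ is promoted to equality by a geometric-series argument: given $b\in B$, one approximates $b$ to within $\delta\|b\|$ by some $q\in uPu^*$, then approximates the residual $b-q\in B$ to within $\delta\|b-q\|\le\delta^2\|b\|$, and so on, showing that $b$ lies within $\delta^n\|b\|$ of the \emph{norm-closed} algebra $uPu^*$ for every $n$. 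Hence $b\in uPu^*$, and $uPu^*=B$.
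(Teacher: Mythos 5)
First, a point of reference: the paper gives no proof of this statement at all --- it is quoted verbatim from \cite{C:Acta} (Theorem 4.3 and Corollary 4.4 there), so your attempt has to be measured against Christensen's original argument rather than anything in this paper.

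Your outline contains a genuine gap at its central step, and it is precisely where all the work in \cite{C:Acta} lies. The averaging trick you invoke --- form $a=\mathrm{mean}_{v}\,y_vv^*$ and read off an exact intertwining relation from invariance of the mean --- is the standard argument for two \emph{genuine} unitary representations $\pi,\rho$ of an amenable group: there one computes $\rho(w)\,\mathrm{mean}_v\,\rho(v)\pi(v)^*\,\pi(w)^*=\mathrm{mean}_v\,\rho(wv)\pi(wv)^*=a$, and equivariance of the family being averaged is what makes the first-order error cancel. In your setting $v\mapsto y_v$ is merely a set-theoretic choice of nearby elements of $B$: it is not multiplicative, the $y_v$ need not be unitaries, and the substitution $v\mapsto wv$ gives $aw=\mathrm{mean}_u\,y_{wu}u^*$, which equals $\theta(w)a$ only if $y_{wu}=\theta(w)y_u$ --- i.e.\ only if you already possess the homomorphism you are trying to construct. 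So ``invariance of the mean forces exactness'' is not justified, and without it the polar-decomposition step has nothing to act on. (A secondary inaccuracy: the full discrete group $\mathcal U(P)$ of an injective von Neumann algebra is not amenable; what injectivity buys, via Connes' theorem, is either an amenable subgroup of unitaries generating $P$, or Schwartz's property P, namely that $\overline{\mathrm{conv}}^{w^*}\{vxv^*:v\in\mathcal U(P)\}$ meets $P'$ for every $x$. The latter legitimately lets you average $vxv^*$ for $x\in B'$ --- using $\|vxv^*-x\|=\|(v-y_v)x-x(v-y_v)\|\le 2\gamma\|x\|$ --- to compare commutants, which is how the amenability actually enters in \cite{C:Acta}.) To repair your scheme one must first upgrade $v\mapsto y_v$ to an exactly multiplicative $*$-homomorphism of an amenable generating group into $B$ (stability of approximate representations, itself a nontrivial iteration), or follow Christensen's route through near inclusions of commutants and completely positive maps; only then does the intertwiner-plus-polar-decomposition argument, including the automatic fact that $a^*a\in P'$, go through. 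Your final paragraph (conjugate the reverse near containment, then promote $uPu^*\subseteq B$ with $B\subseteq_\delta uPu^*$, $\delta<1$, to equality via the geometric-series/norm-closedness argument) is sound in outline, though the crude composition of constants you describe does not obviously land below $1$ at $\gamma<1/101$ and would need the sharper bookkeeping of the original.
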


In the next section we consider the structure of close masas, providing a one-to-one correspondence between unitary equivalence classes of Cartan masas, and transfer property $\Gamma$, solidity and strong solidity to close factors. These results were originally given in the preprint version of \cite{CCSSWW:Duke} on the arXiv, but were removed from the publication version. In Section \ref{S3} we consider tensor product decompositions, and the paper ends with a short list of open problems in Section \ref{S4}.

\section{Masas, solidity and property $\Gamma$}
We start with  the structure of maximal abelian subalgebras (masas) in close II$_1$ factors. Recall that in \cite{D:Ann} Diximer introduced a rough classification of masas $A$ in a II$_1$ factor $M$ through their \emph{normalisers}, namely those unitaries $u\in M$ with $uAu^*=A$.  The collection of all normalisers is denoted $\mathcal N(A\subseteq M)$ and $A$ is said to be \emph{Cartan} when $\mathcal N(A\subseteq M)$ generates $M$ as a von Neumann algebra (general subalgebras $P$ of $M$ with $\mathcal N(P\subseteq M)''=M$ are called regular). At the other extreme, $A$ is said to be \emph{singular} when $\mathcal N(A\subseteq M)\subseteq A$. The transfer of normalisers between close pairs of inclusions provided a key tool in \cite{CCSSWW:Duke}, which we use here to examine the behaviour of close masas in close algebras.

Since the breakthrough paper \cite{OP:Ann}, there has been considerable interest in how many Cartan masas a   II$_1$ factor contains, up to unitary conjugacy:
\cite{OP:Ann} gives the first class of factors with a unique Cartan
masa up to unitary conjugacy, \cite{CJ:BAMS} provides the first examples of factors with two Cartan
masas which are not even conjugate by an automorphism, and  \cite{OP:AJM}  presents more
examples of factors with unique Cartan masas and also new factors with  at least
two Cartan masas. More recently, large classes of crossed products have been shown
to have unique Cartan masas \cite{CS:ASENS,PV:Acta,PV:Crelle}. At the other end
of the spectrum,
\cite{SV:arXiv} provides a   II$_1$ factor with unclassifiably many Cartan
masas up to conjugacy by an automorphism. Our first objective is to show that close   II$_1$
factors have the same Cartan masa structure.  Given a  II$_1$
factor $M$, let $\mathrm{Cartan}(M)$ be the collection of equivalence classes of
Cartan masas in $M$ under the relation $A_1\sim A_2$ if and only if there is a
unitary $u\in M$ with $uA_1u^*=A_2$.

\begin{theorem}\label{CartanThm}
Let $M$ and $N$ be   {\rm{II}}$_1$ factors with separable preduals acting nondegenerately on a
Hilbert space $\Hs$
with
$M\subset_\gamma N$ and $N\subset_\gamma M$ for a constant $\gamma<5.7\times
10^{-16}$.  
\begin{enumerate}[(i)]
\item\label{CartanThm:Part1} Suppose $P\subseteq M$ is an amenable regular von
Neumann subalgebra with $P'\cap M\subseteq P$ and  $Q\subseteq N$ is a von
Neumann subalgebra with 
$P\subset_\delta Q\subset_\delta P$ for some $\delta\geq 0$ such that $300\gamma+\delta<1/8$. Then $Q$ is regular in $N$ and satisfies $Q'\cap N\subseteq Q$.
\item\label{CartanThm:Part2} If $A$ is a Cartan masa in $M$, then there exists a Cartan masa
$B$ in $N$ satisfying
$d(A,B)<100\gamma$.
\item\label{CartanThm:Part3} There exists a canonical bijective map $\Theta :
{\mathrm{Cartan}}(M)\to {\mathrm{Cartan}}(N)$, which is defined by $\theta([A])=[B]$ where
$A\subseteq M$ and $B\subseteq N$ are Cartan masas with $d(A,B)<100\gamma$.
\item\label{CartanThm:Part4} If $M$ has a unique Cartan masa up to unitary conjugacy, then
the same is true for $N$.
\end{enumerate}
\end{theorem}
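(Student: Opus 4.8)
The plan is to regard part~\eqref{CartanThm:Part1} as the technical core and to deduce parts~\eqref{CartanThm:Part2}--\eqref{CartanThm:Part4} from it together with the embedding theorem, Theorem~\ref{Injective}. For the relative commutant assertion $Q'\cap N\subseteq Q$ in~\eqref{CartanThm:Part1}, I would first transfer approximate commutation and then rigidify it using the trace-preserving conditional expectation $E_Q\colon N\to Q$ supplied by the finite trace on $N$. Given $x\in Q'\cap N$ with $\|x\|\leq 1$, the $\delta$-closeness of $P$ and $Q$ makes $x$ almost commute with $P$; pulling $x$ back to some $x'\in M$ with $\|x-x'\|\leq\gamma$ via $N\subset_\gamma M$ keeps this almost-commutation, and the amenable averaging argument underlying Theorem~\ref{Injective} produces $x''\in P'\cap M$ with $\|x'-x''\|=O(\gamma+\delta)$. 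The hypothesis $P'\cap M\subseteq P$ then puts $x''$ in $P$, hence within $O(\gamma+\delta)$ of $Q$, so that $Q'\cap N\subseteq_\epsilon Q$ with $\epsilon=O(\gamma+\delta)$. To upgrade this to genuine containment I would use that $E_Q$ is a $Q$-bimodule map, so $E_Q(Q'\cap N)\subseteq\Z(Q)$; thus for $x\in Q'\cap N$ the element $y=x-E_Q(x)$ again lies in $Q'\cap N$, has $E_Q(y)=0$, and satisfies $\|y\|=\|x-E_Q(x)\|\leq 2\,\mathrm{dist}(x,Q)\leq 2\epsilon\|x\|$. Feeding $y$ back into the same estimate gives $\|y\|\leq 2\epsilon\|y\|$, so $y=0$ as soon as $2\epsilon<1$ — which is precisely the room that $300\gamma+\delta<1/8$ provides — and hence $x=E_Q(x)\in\Z(Q)\subseteq Q$.

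For the regularity assertion I would transfer normalisers. Each $u\in\mathcal N(P\subseteq M)$ is, by $M\subset_\gamma N$ and a polar-decomposition rounding, within $O(\gamma)$ of a unitary $v\in N$; since $uPu^*=P$ and $P,Q$ are $\delta$-close, $v$ approximately normalises $Q$, with $d(vQv^*,Q)=O(\gamma+\delta)$. As $Q$ inherits amenability from $P$, I would then correct $v$ to an honest normaliser by applying Theorem~\ref{Injective} to the two close amenable copies $Q$ and $vQv^*$ inside $N$, producing a unitary $w\in(Q\cup vQv^*)''\subseteq N$ near $I_\Hs$ with $w(vQv^*)w^*=Q$, so that $wv\in\mathcal N(Q\subseteq N)$ lies within $O(\gamma+\delta)$ of $u$. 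Writing $R=\mathcal N(Q\subseteq N)''\subseteq N$, this gives $\mathcal N(P\subseteq M)\subseteq_\eta R$; one then seeks to pass to the generated algebra to obtain $M\subseteq_{\eta'}R$, and composing with $N\subset_\gamma M$ to get $N\subseteq_{\eta''}R$, whereupon the same conditional-expectation rigidity (now applied to $E_R$) forces $R=N$. The hard part will be exactly this regularity transfer: one must check that amenability genuinely survives the passage from $P$ to $Q$ at the permitted distances, and — more delicately — control the correction so that the transferred normalisers generate all of $N$ rather than a proper subalgebra. This is where the precise constant bookkeeping behind $300\gamma+\delta<1/8$ is needed, since a crude estimate already pushes $d(vQv^*,Q)$ past the threshold demanded by Theorem~\ref{Injective}, and since near-containment of a generating set does not, by itself, pass to the generated von Neumann algebra.

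Granting~\eqref{CartanThm:Part1}, the remaining parts follow cleanly. For~\eqref{CartanThm:Part2}, apply Theorem~\ref{Injective} to the amenable $A\subseteq M\subset_\gamma N$ to obtain a unitary $u$ with $\|I_\Hs-u\|\leq 150\gamma$ and $B:=uAu^*\subseteq N$ abelian with $d(A,B)<100\gamma$; part~\eqref{CartanThm:Part1} applied to $P=A$, $Q=B$ and $\delta\approx 100\gamma$ (so that $300\gamma+\delta<1/8$) shows $B$ is regular with $B'\cap N\subseteq B$, and since $B$ is abelian this forces $B'\cap N=B$, making $B$ a Cartan masa. For~\eqref{CartanThm:Part3}, I would verify that $\Theta$ is well defined — any two Cartan masas of $N$ lying within $200\gamma$ of each other are unitarily conjugate in $N$ by the equality case of Theorem~\ref{Injective}, and a conjugating unitary from $M$ can be perturbed into $N$ to handle a change of representative — and construct the symmetric map $\Theta'$ from $N\subset_\gamma M$; the composites $\Theta'\Theta$ and $\Theta\Theta'$ each return a Cartan masa within $O(\gamma)$ of the original and hence in its own class, so $\Theta$ is bijective. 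Part~\eqref{CartanThm:Part4} is then immediate, since~\eqref{CartanThm:Part2} gives $\mathrm{Cartan}(N)\neq\emptyset$ and the bijection of~\eqref{CartanThm:Part3} transports $|\mathrm{Cartan}(M)|=1$ to $|\mathrm{Cartan}(N)|=1$.
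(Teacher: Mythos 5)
Your reductions of parts (\ref{CartanThm:Part2})--(\ref{CartanThm:Part4}) to part (\ref{CartanThm:Part1}) match the paper's, and your conditional-expectation rigidity argument for $Q'\cap N\subseteq Q$ is a workable variant of the route the paper takes (via \cite[Lemma 2.16]{CCSSWW:Duke} and the Banach-space rigidity of \cite[Proposition 2.4]{CSSWW:Acta}). The problem is the regularity assertion in (\ref{CartanThm:Part1}), which is the real content of the theorem, and there your proposal stops exactly where the difficulty begins. Transferring each $u\in\N(P\subseteq M)$ to a nearby element of $\N(Q\subseteq N)$ gives only a near-containment of the normaliser \emph{set}, and, as you yourself note, this does not pass to the generated von Neumann algebra: an element of $M$ is an ultraweak limit of linear combinations $\sum_i\lambda_iu_i$ of normalisers with no control on $\sum_i|\lambda_i|$, so replacing each $u_i$ by its correction gives an error you cannot bound. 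Your plan to "pass to the generated algebra to obtain $M\subseteq_{\eta'}R$" is therefore not a step you can currently perform, and without it the conditional-expectation rigidity applied to $E_R$ never gets off the ground. Flagging a gap is not the same as closing it, so the proof of (\ref{CartanThm:Part1}) is incomplete at its core.

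The paper avoids this by not working with $Q$ directly at all. It first applies Theorem \ref{Injective} to $P\subset_\gamma N$ to produce a unitary $u$ with $uPu^*\subseteq N$, and replaces $N$ by $N_1=u^*Nu$, so that $P$ becomes a \emph{common} subalgebra of $M$ and $N_1$ with $M\subset_{301\gamma}N_1\subset_{301\gamma}M$. In that common-subalgebra setting it invokes \cite[Lemma 4.10]{CCSSWW:Duke}, which is precisely the machine that converts normaliser transfer into regularity; its proof uses the simultaneous reduction of $M$ and $N_1$ to standard position, the resulting identification of the basic constructions $\langle M,e_P\rangle=\langle N_1,e_P\rangle$, and the normaliser-transfer lemma \cite[Lemma 3.4]{CCSSWW:Duke} -- none of which is reproduced or replaced in your sketch. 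Having concluded that $Q_1:=uPu^*$ is regular in $N$ with $Q_1'\cap N\subseteq Q_1$, the paper then notes $Q_1\subset_\eta Q\subset_\eta Q_1$ with $\eta=300\gamma+\delta<1/8$ and uses \cite[Theorem 4.1]{C:IJM} to conjugate $Q_1$ onto $Q$ by a unitary \emph{inside} $N$, so that $Q$ inherits both properties; this is also the cleanest way to see that $Q$ is amenable, which your argument assumes when applying Theorem \ref{Injective} to $Q$ and $vQv^*$ (note that for $\delta$ between $1/101$ and $1/8$ the equality case of Theorem \ref{Injective} does not apply to the pair $P,Q$, so amenability of $Q$ also needs \cite[Theorem 4.1]{C:IJM}). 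To repair your proof you should either adopt this reduction to a common subalgebra and cite \cite[Lemma 4.10]{CCSSWW:Duke}, or supply an independent argument that upgrades $\N(P\subseteq M)\subseteq_\eta\N(Q\subseteq N)''$ to $N=\N(Q\subseteq N)''$.
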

\begin{proof}
(\ref{CartanThm:Part1}). Since $\gamma<1/100$, we may apply the embedding theorem (Theorem \ref{Injective}) to obtain a
unitary $u\in (P\cup N)''$
satisfying $\|u-I_\Hs\|\leq 150\gamma$, $uPu^*\subseteq N$, and $d(P,uPu^*)\leq
100\gamma$. Define $N_1=u^*Nu$, so
that $P\subseteq M\cap N_1$ and $M\subset_{\gamma_1}N_1\subset_{\gamma_1}M$
where $\gamma_1=301\gamma$. Then
the bound on $\gamma$ gives $\gamma_1<1.74\times 10^{-13}$, so we may apply
\cite[Lemma 4.10]{CCSSWW:Duke}
 to conclude
that $P$ is regular in $N_1$ and $P'\cap N_1\subseteq P$. Thus $Q_1:=uPu^*$ is
regular in $N$ and satisfies
$Q_1'\cap N\subseteq Q_1$. Now by \cite[Equation (2.1)]{CCSSWW:Duke},
$Q_1\subset_{\eta}Q\subset_{\eta}Q_1$ where
$\eta=300\gamma +\delta <1/8$.  By \cite[Theorem 4.1]{C:IJM}, $Q$ and $Q_1$
are unitarily conjugate inside
$N$ (strictly speaking, the statement of \cite[Theorem 4.1]{C:IJM} requires the hypothesis $d(Q,Q_1)<\frac{1}{8}$ but, as noted in \cite[Section 3]{CCSSWW:Duke}, the proof only needs the hypothesis in terms of near inclusions). Thus $Q$ inherits the desired properties from $Q_1$.

(\ref{CartanThm:Part2}). Given a Cartan masa $A$ in $M$, Theorem 1.2 gives a unitary $u\in (A\cup N)''$ such that the algebra $B:=uAu^*$ lies in $N$ and satisfies $d(A,B)<100\gamma$. Then $B$ is a masa in $N$ by \cite[Lemma 2.17]{CCSSWW:Duke}
 and so is Cartan by (\ref{CartanThm:Part1}).

(\ref{CartanThm:Part3}). From (\ref{CartanThm:Part2}), we may associate to each Cartan masa $A$ in $M$  a
Cartan masa $B$ in $N$ so that
$d(A,B)< 100\gamma$. Let $A_1$ be another Cartan masa in $M$ and choose a Cartan
masa $B_1$ in $N$ with
$d(A_1,B_1)< 100\gamma$. If there exists a unitary $u\in M$ such that
$A_1=uAu^*$, then by \cite[Lemma 2.12 (i)]{CCSSWW:Duke},
 there is a unitary $v\in N$ with $\|u-v\|<\sqrt{2}\gamma$. Then
\begin{align}
d(B_1,vBv^*)&\leq d(B_1,uBu^*) +2\|u-v\|< d(B_1,uAu^*)+ 2\sqrt{2}\gamma
+100\gamma\notag\\
&=d(B_1,A_1) +(100+2\sqrt{2})\gamma<(200 +2\sqrt{2})\gamma <1/8.
\end{align}
Thus $B_1$ and $vBv^*$ are unitarily conjugate in $N$ by \cite[Theorem 4.1]{C:IJM}, and hence $B_1$ and $B$ are
unitarily conjugate in $N$. This
shows that there is a well defined map $\Theta :{\mathrm{Cartan}}(M)\to
{\mathrm{Cartan}}(N)$, defined on
$[A]$ by choosing a Cartan masa $B$ as above and letting $\Theta([A])=[B]$. In
the same way there is a map
$\Phi: \mathrm{Cartan}(N)\to \mathrm{Cartan}(M)$ so that for each Cartan masa
$B$ in $N$, $\Phi([B])=[A]$
where $A\subseteq M$ is chosen so that $d(B,A)< 100\gamma$. By construction
$\Phi$ is the inverse of $\Theta$ so $\Theta$ is bijective.

(\ref{CartanThm:Part4}). This is immediate from (\ref{CartanThm:Part3}).
\end{proof}

At the other end of the spectrum, one has the singular masas.  Various ad hoc methods have been used to determine whether certain explicit singular masas are conjugate via an automorphism of the underlying factor; perhaps the most successful is Puk\`anszky's invariant, originating in \cite{P:CJM}, which associates to a masa $A\subseteq M$ a nonempty subset of $\mathbb N\cup \{\infty\}$ as follows: the relative commutant of $A$ inside the basic construction algebra $\langle M,e_A\rangle$ gives a type I von Neumann algebra $A'\cap \langle M,e_A\rangle=(A\cup J_MAJ_M)'$.   This always has a type I$_1$ summand, as $e_A$ is central in $A'\cap \langle M,e_A\rangle$ with $e_A(A'\cap \langle M,e_A\rangle)=e_AA$.  The Puk\'anszky invariant $\puk(A\subseteq M)$ consists of those $n\in\mathbb N\cup\{\infty\}$ such that $(1-e_A)(A'\cap \langle M,e_A\rangle)$ has a nonzero type I$_n$ component.  See \cite[Chapter 7]{SS:Book2} for more information on the Puk\'anszky invariant (including proofs of the facts above). In the next result we do not need the precise definitions of the basic construction, just that the Puk\'anszky invariant is obtained from the relative positions of $e_A$, $A$ and $\langle M,e_A\rangle$. Note that the embedding theorem can be used to provide algebras $B\subseteq N$ satisfying the estimates of the next proposition, when $\delta$ is sufficiently small.
\begin{proposition}\label{MasaProp}
Let $M$ and $N$ be   {\rm{II}}$_1$ factors with separable preduals acting nondegenerately on a
Hilbert space $\Hs$
with
$M\subset_\gamma N$ and $N\subset_\gamma M$ for a constant $\gamma$. Let $A\subseteq M$ be a masa in $M$.
\begin{enumerate}[(i)]
\item Suppose that $\delta>0$ satisfies $(4+2\sqrt{2})(\gamma+24\delta)<1$. If $A$ is singular, then any subalgebra $B\subseteq N$ with $d(A,B)<\delta$ is a singular masa in $N$.
\item Suppose that $\delta>0$ satisfies $(\gamma+24\delta)<1.74\times 10^{-13}$.  Then any von Neumann subalgebra $B\subseteq N$ with $d(A,B)<\delta$ is a masa in $N$ satisfying 
\begin{equation}\puk(A\subseteq M)=\puk(B\subseteq N).
\end{equation}
\end{enumerate}
\end{proposition}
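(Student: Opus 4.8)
For part (i), the plan is to show that every normaliser of $B$ already lies in $B$; note first that $B$ is a masa in $N$ by \cite[Lemma 2.17]{CCSSWW:Duke}, the smallness of $\gamma+24\delta$ guaranteeing the hypotheses of that lemma. Take $v\in\mathcal N(B\subseteq N)$. Using the unitary transfer \cite[Lemma 2.12 (i)]{CCSSWW:Duke} across $N\subset_\gamma M$, I would produce a unitary $w\in M$ with $\|v-w\|<\sqrt2\,\gamma$. Estimating through the chain $wAw^*\approx wBw^*\approx vBv^*=B\approx A$ shows that $w$ is an \emph{approximate} normaliser of $A$, with $d(wAw^*,A)$ of order $\gamma+\delta$. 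Since $wAw^*$ and $A$ are then close masas in $M$, \cite[Theorem 4.1]{C:IJM} supplies a unitary $z\in M$, with $\|I_\Hs-z\|$ controlled by $d(wAw^*,A)$, such that $z(wAw^*)z^*=A$; hence $zw\in\mathcal N(A\subseteq M)$, and singularity of $A$ forces $zw\in A$. Unwinding gives $d(w,A)\le\|I_\Hs-z\|$, and collecting all the constants yields $d(v,B)\le(2+\sqrt2)(\gamma+24\delta)$.

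The final step for (i) is that a normalising unitary lying at operator distance less than $1/2$ from the masa it normalises must commute with it. Indeed, if $b_0\in B$ realises $d(v,B)$, then for every $b\in B$ one has $\|vbv^*-b\|=\|vb-bv\|\le 2\,d(v,B)\,\|b\|$, so the automorphism $\mathrm{Ad}(v)|_B$ of the abelian algebra $B$ satisfies $\|\mathrm{Ad}(v)(b)-b\|<\|b\|$ for all $b\neq0$ precisely because $2\,d(v,B)\le(4+2\sqrt2)(\gamma+24\delta)<1$; a routine argument on the underlying measure space then shows $\mathrm{Ad}(v)|_B=\mathrm{id}$, whence $v\in B'\cap N=B$. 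Thus $B$ is singular. The only real labour here is the bookkeeping of constants through the transfer and through \cite[Theorem 4.1]{C:IJM} so that they assemble into $2\,d(v,B)\le(4+2\sqrt2)(\gamma+24\delta)$; there is no conceptual obstacle.

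For part (ii) the strategy is to reduce the equality of invariants to a spatial comparison of basic constructions. Since $\gamma+24\delta<1.74\times10^{-13}<1/100$, the algebra $B$ is again a masa by \cite[Lemma 2.17]{CCSSWW:Duke}, and the embedding theorem (Theorem \ref{Injective}) applies to the amenable abelian algebra $A$, producing a unitary $u$ close to $I_\Hs$ with $uAu^*\subseteq N$ and $d(A,uAu^*)\le100\gamma$. As $uAu^*$ is then a masa in $N$ close to $B$, \cite[Theorem 4.1]{C:IJM} makes $uAu^*$ and $B$ unitarily conjugate in $N$, and since $\puk$ is a unitary conjugacy invariant it suffices to compare $\puk(A\subseteq M)$ with $\puk(uAu^*\subseteq N)$, equivalently (conjugating by $u^*$) with $\puk(A\subseteq N_1)$ for $N_1:=u^*Nu$, where $A$ is now a common masa of the two close factors $M$ and $N_1$, with $M\subset_{\gamma_1}N_1\subset_{\gamma_1}M$ for $\gamma_1$ of order $\gamma$. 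It then remains to show that this shared invariant does not change between two close factors containing $A$. Here I would invoke the remark that $\puk$ depends only on the relative positions of $e_A$, $A$ and $\langle M,e_A\rangle$: the amenable abelian algebras $(A\cup J_MAJ_M)''$ and $(A\cup J_{N_1}AJ_{N_1})''$ are close because $M$ and $N_1$ are, so the embedding theorem conjugates one onto the other by a near-identity unitary that fixes $A$ and carries $e_A$ to $e_A$, implementing a spatial isomorphism of the two basic constructions which intertwines the central projections $1-e_A$. As $\puk(A\subseteq M)$ is the integer-valued list of type~I$_n$ summands of $(1-e_A)(A'\cap\langle\,\cdot\,,e_A\rangle)$, and such a discrete invariant cannot vary under a spatial isomorphism, the two invariants coincide.

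The main obstacle is exactly this last comparison in (ii): the two basic constructions live \emph{a priori} over the distinct standard spaces $L^2(M)$ and $L^2(N_1)$, so one must genuinely build the near-identity spatial isomorphism matching $e_A$, $A$ and the ambient algebras, rather than merely asserting closeness. Controlling the modular conjugations $J_M$ and $J_{N_1}$ under the perturbation, and verifying that the conjugating unitary preserves the distinguished Jones projection $e_A$ (so that the type~I$_1$ part carried by $e_A$ is not mixed with the remainder used to read off $\puk$), is where the threshold $1.74\times10^{-13}$ is consumed and is the step demanding the greatest care.
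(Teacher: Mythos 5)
Your overall plan for (i) --- transfer a normaliser $v$ of $B$ to an approximate normaliser of $A$, use singularity to force it close to $A$, and conclude by a rigidity argument for near-identity automorphisms of abelian algebras --- is reasonable, and your final rigidity step is correct (distinct commuting projections are at distance $1$, so an automorphism of $B$ moving every element by less than its norm fixes all projections and is the identity). The gap is quantitative, and it is fatal for the statement as written. Your chain gives $d(wAw^*,A)<2\sqrt2\,\gamma+2\delta$, while the hypothesis $(4+2\sqrt2)(\gamma+24\delta)<1$ permits $\gamma$ up to about $0.146$, so $d(wAw^*,A)$ can be about $0.41$: then \cite[Theorem 4.1]{C:IJM}, which needs the near inclusions to be below $1/8$, is not applicable at all. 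Even where it applies, the unitary $z$ it produces satisfies only $\|I_\Hs-z\|\le 7\,d(wAw^*,A)$ (the crude bound the paper uses elsewhere), which yields $d(v,B)\lesssim 15\sqrt2\,\gamma+15\delta$; your asserted bound $d(v,B)\le(2+\sqrt2)(\gamma+24\delta)$ does not follow, and $2d(v,B)<1$ fails for $\gamma$ in the allowed range. The paper avoids all of this by first conjugating $A$ \emph{onto} $B$ by a unitary $u$ with $\|u-1\|<12\delta$ (via \cite[Corollary 4.2(c)]{C:Acta}, both algebras being amenable and $\delta$-close), so that $A$ is a common masa of $M$ and $N_1=u^*Nu$ with near inclusions of constant exactly $\gamma+24\delta$; then \cite[Lemma 3.4(iii)]{CCSSWW:Duke} produces an \emph{exact} normaliser $v'\in\N(A\subseteq M)$ with $\|v-v'\|\le(4+2\sqrt2)(\gamma+24\delta)<1$, and \cite[Proposition 3.2]{CCSSWW:Duke} gives $\Ad v=\Ad v'$ on $A$. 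That is precisely where the constant in the hypothesis comes from. (The masa property of $B$ is also obtained under this weak hypothesis by a more elementary route, via \cite[Lemma 2.3]{C:JLMS} and \cite[Lemma 2.16(i)]{CCSSWW:Duke}, rather than \cite[Lemma 2.17]{CCSSWW:Duke}.)

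For (ii) you have correctly isolated the crux --- identifying the two basic constructions, which a priori live on $L^2(M)$ and $L^2(N_1)$ --- but you have not supplied an argument for it, and what you sketch does not parse: $(A\cup J_MAJ_M)''$ and $(A\cup J_{N_1}AJ_{N_1})''$ do not act on a common Hilbert space, so the embedding theorem cannot be applied to the pair, and a conjugating unitary produced that way would in any case have no reason to fix $A$ pointwise or to carry $e_A$ to $e_A$. This missing step is exactly the content of \cite[Lemma 4.10]{CCSSWW:Duke}, which the paper invokes: it represents $M$ and $N_1$ simultaneously in standard position with respect to a common trace vector so that the two basic constructions are literally equal, $\langle M,e_A\rangle=\langle N_1,e_A\rangle$, whence $A'\cap\langle M,e_A\rangle=A'\cap\langle N_1,e_A\rangle$ and the invariants coincide. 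Note also that the threshold $1.74\times10^{-13}$ is calibrated to the near-inclusion constant $\gamma+24\delta$ coming from the $12\delta$-unitary above; your route, which moves $A$ into $N$ by an embedding-theorem unitary with $\|u-I_\Hs\|\le150\gamma$, produces the constant $301\gamma$ instead, which is not bounded by $1.74\times10^{-13}$ under the stated hypothesis.
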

\begin{proof}
First note that \cite[Lemma 2.3]{C:JLMS} shows that $B$ is abelian.  Then \cite[Lemma 2.16(i)]{CCSSWW:Duke} gives $B'\cap N
\subset_{2\sqrt{2}\delta+\gamma} A'\cap M=A\subset_{\delta} B$ so that 
$B'\cap N\subset_{\eta}B\subseteq B'\cap N$, where
$\eta=2\sqrt{2}\delta+\gamma+ (1+\gamma+2\sqrt{2}\delta)\delta$ using \eqref{containeq}. Since the hypothesis of (i) implies that $\eta<1$, we have $B=B'\cap N$ (see \cite[Proposition 2.4]{CSSWW:Acta}). Thus $B$ is a masa in $N$.   As both $A$ and $B$ are amenable, by \cite[Corollary 4.2(c)]{C:Acta} there exists a unitary $u\in (A\cup B)''$ with $\|u-1\|<12\delta$ and $uAu^*=B$. Write $N_1=u^*Nu$ so that $A$ is a masa in $N_1$ and the near inclusions $M\subset_{\gamma+24\delta}N_1$ and $N_1\subset_{\gamma+24\delta}M$ hold.

Now suppose $A\subseteq M$ is singular. Given any unitary normaliser $v\in \N(A\subseteq N_1)$, \cite[Lemma 3.4(iii)]{CCSSWW:Duke} provides a unitary normaliser $v'\in \N(A\subseteq M)$ with $\|v-v'\|\leq (4+2\sqrt{2})(\gamma+24\delta)<1$.  By \cite[Proposition 3.2]{CCSSWW:Duke}, we have $v'=vu_1u_2$ for unitaries $u_1\in A$ and $u_2\in A'\cap \mathcal B(\Hs)$.  Thus $vxv^*=v'xv'^*$ for all $x\in A$. Since $A\subseteq M$ is singular, it follows that $vxv^*=x$ for all $x\in A$, and so $v\in A$ since $A$ is a masa in $N_1$. Thus $A$ is singular in $N_1$, and so $B$ is singular in $N$, proving (i).

For (ii), as $M\subset_{\gamma_1}N_1$ and $N_1\subset_{\gamma_1}M$ for $\gamma_1=(\gamma+24\delta)<1.74\times 10^{-13}$, we can use \cite[Lemma 4.10]{CCSSWW:Duke} (with $P=A$) to simultaneously represent $M$ and $N_1$ on a new Hilbert space $\Ks$ such that both these algebras are simultaneously in standard form with respect to the same trace vector, and have equal basic constructions $\langle M,e_A\rangle=\langle N_1,e_A\rangle$.  It follows that $\puk(A\subseteq M)=\puk(A\subseteq N_1)$, and hence $\puk(A\subseteq M)=\puk(B\subseteq N)$.
\end{proof}

Recall from \cite{O:Acta} that a II$_1$ factor is said to be \emph{solid} when
every diffuse  unital von Neumann subalgebra $P\subseteq M$ has an
amenable relative commutant $P'\cap M$.   Note that to establish solidity of $M$ it suffices to show that $P'\cap M$ is amenable when $P$ is diffuse and amenable (or abelian), as given a general diffuse subalgebra $P$ of $M$, take a maximal abelian subalgebra $P_0$ of $P$.  This will be diffuse and $P'\cap M\subseteq P_0'\cap M$ so that $P'\cap M$ will inherit amenability from $P_0'\cap M$ (since $M$ is finite).

Subsequently Ozawa and Popa
generalised the concept of
solidity further in \cite{OP:Ann}: a   II$_1$ factor $M$ is said to be \emph{strongly solid}
if every unital diffuse amenable subalgebra $B\subseteq M$ has an amenable
normalizing algebra $\N(B\subseteq M)''$.  Both these properties transfer to sufficiently close factors, as we now show.

\begin{proposition}\label{SSolid}
Let $M$ and $N$ be   ${\mathrm{II}}_1$ factors acting
nondegenerately on a Hilbert space
$\Hs$ with $d(M,N)<1/3200$.  Then: 
\begin{enumerate}[\rm (i)]
\item\label{SSolid:Part1} $M$
is solid if and only if $N$ is solid;
\item\label{SSolid:Part2} $M$
is strongly solid if and only if $N$ is strongly solid.
\end{enumerate}
\end{proposition}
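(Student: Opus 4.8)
The plan is to prove each equivalence by transporting a diffuse amenable subalgebra from the ``wrong'' factor into the other one with the embedding theorem, applying the hypothesis on $M$ there, and then transporting the resulting amenable algebra back. By symmetry of the Kadison--Kastler metric it is enough to assume $M$ is solid (resp.\ strongly solid) and deduce the same for $N$. Fix $\gamma$ with $d(M,N)<\gamma<1/3200$, so that $M\subset_\gamma N$ and $N\subset_\gamma M$. Given a unital diffuse amenable subalgebra $Q\subseteq N$ (for solidity the remark before the proposition lets me take $Q$ abelian), the inclusion $Q\subseteq N\subset_\gamma M$ gives $Q\subset_\gamma M$ with $\gamma<1/100$, so Theorem~\ref{Injective} supplies a unitary $u$ with $\|u-I_\Hs\|\leq150\gamma$ and $P:=uQu^*\subseteq M$. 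The algebra $P$ is again unital, diffuse and amenable. Setting $M_1=u^*Mu$ I have $Q\subseteq M_1\cap N$, and since conjugation by $u$ moves the unit ball at most $2\|u-I_\Hs\|\leq300\gamma$, composition \eqref{containeq} gives $M_1\subset_{\gamma_1}N$ and $N\subset_{\gamma_1}M_1$ with $\gamma_1=301\gamma+300\gamma^2$.

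For (i), solidity of $M$ applied to $P\subseteq M$ shows $P'\cap M$ is amenable, hence so is its conjugate $Q'\cap M_1=u^*(P'\cap M)u$. Since $Q$ is a common subalgebra of the close factors $M_1$ and $N$, the relative-commutant estimate \cite[Lemma 2.16]{CCSSWW:Duke} (with the subalgebras equal, so only the ambient constant $\gamma_1$ contributes) yields near inclusions $Q'\cap N\subset_\eta Q'\cap M_1\subset_\eta Q'\cap N$ with $\eta$ of the order of $\gamma_1$. The bound $\gamma<1/3200$ is what forces $\eta<1/8$, so \cite[Theorem 4.1]{C:IJM} makes $Q'\cap N$ unitarily conjugate to the amenable algebra $Q'\cap M_1$, and therefore amenable. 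Thus $N$ is solid, by the reduction to abelian subalgebras noted before the proposition.

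For (ii) the first half is identical: strong solidity of $M$ gives that $\N(P\subseteq M)''$ is amenable, so its conjugate $\N(Q\subseteq M_1)''=u^*\N(P\subseteq M)''u$ is amenable. In place of the relative commutant I would now use the transfer of normalisers \cite[Lemma 3.4]{CCSSWW:Duke} for the two close inclusions $Q\subseteq M_1$ and $Q\subseteq N$: each normaliser of $Q$ in $N$ lies within a fixed multiple of $\gamma_1$ of a normaliser of $Q$ in $M_1$, and vice versa. Promoting this to the generated von Neumann algebras produces near inclusions $\N(Q\subseteq N)''\subset_\eta\N(Q\subseteq M_1)''\subset_\eta\N(Q\subseteq N)''$, and once again the choice of $\gamma$ keeps $\eta<1/8$, so \cite[Theorem 4.1]{C:IJM} gives that $\N(Q\subseteq N)''$ is unitarily conjugate to the amenable algebra $\N(Q\subseteq M_1)''$ and hence amenable. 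Therefore $N$ is strongly solid.

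The main obstacle is the promotion step in (ii): \cite[Lemma 3.4]{CCSSWW:Duke} controls each normaliser individually, but a pointwise estimate $\mathrm{dist}(v,\N(Q\subseteq M_1))\leq\eta$ does not by itself bound arbitrary products and weak-$*$ limits of normalisers, so turning it into a near inclusion of the generated algebras $\N(Q\subseteq\cdot)''$ with a constant still below the $1/8$ threshold requires care --- this is where the decomposition $v'=vu_1u_2$ with $u_1\in Q$ and $u_2\in Q'\cap\B(\Hs)$ from \cite[Proposition 3.2]{CCSSWW:Duke} is used to keep the constant proportional to $\gamma_1$ rather than letting it accumulate. The remaining work is routine bookkeeping: tracking the constants through the conjugation by $u$, the composition of near inclusions, and the transfer estimates, and checking that the largest of them stays under $1/8$, which is precisely what the hypothesis $d(M,N)<1/3200$ guarantees.
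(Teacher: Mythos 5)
Your overall strategy (one application of the embedding theorem to move the test subalgebra across, then a comparison of relative commutants and normalising algebras of the common subalgebra $Q\subseteq M_1\cap N$) is close in spirit to the paper's, but it stops one step short, and this leaves two genuine gaps. First, in both parts you invoke \cite[Theorem 4.1]{C:IJM} to conjugate $Q'\cap M_1$ onto $Q'\cap N$ (resp.\ one normalising algebra onto the other). That theorem concerns two subalgebras, one of them approximately finite dimensional, of a \emph{common} finite von Neumann algebra; here $Q'\cap M_1\subseteq M_1$ and $Q'\cap N\subseteq N$ live in two different, merely close, factors, and $(M_1\cup N)''$ need not be finite, so the theorem does not apply. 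Moreover $d(Q'\cap M_1,Q'\cap N)\leq d(M_1,N)\approx 301\gamma$ can be as large as $\approx 0.094$, which is also beyond the $1/100$ range of Theorem \ref{Injective}, so the step cannot be repaired by embedding without strengthening the hypothesis on $\gamma$. Second, and more seriously, in (ii) you correctly flag the promotion of the pointwise normaliser estimate from \cite[Lemma 3.4]{CCSSWW:Duke} to a near inclusion of $\N(Q\subseteq N)''$ into $\N(Q\subseteq M_1)''$ as the main obstacle, but you do not resolve it, and the tool you point to cannot resolve it in your setup: writing $v'=vww'$ with $w\in Q$ and $w'\in Q'\cap\B(\Hs)$ via \cite[Proposition 3.2]{CCSSWW:Duke}, the correction $w'=w^*v^*v'$ involves $v\in N$ and $v'\in M_1$, so $w'$ lands in neither factor; you therefore get neither a containment nor a uniformly controlled near containment of the generated von Neumann algebras (the errors over products and weak-$*$ limits of normalisers accumulate).

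The paper's proof avoids both problems by applying the embedding theorem \emph{twice}: starting from a diffuse amenable $P\subseteq M$ it produces $Q\subseteq N$, forms the amenable algebra $Q_1=\N(Q\subseteq N)''$ (or $(Q\cup(Q'\cap N))''$) inside $N$, and then embeds $Q_1$ back into $M$; a final unitary $u_1\in M$ supplied by \cite[Theorem 4.1]{C:IJM} --- now legitimately applied to the two amenable copies of $P$ sitting inside the single factor $M$, which is where the constraint $400\gamma<1/8$, i.e.\ $\gamma<1/3200$, originates --- arranges that $P$ is exactly a common subalgebra of $M$ and $N_1$ \emph{and} that $\N(P\subseteq N_1)''\subseteq M$. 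This honest containment is the whole point: it gives $P'\cap N_1\subseteq P'\cap M$, whence equality by the $\eta<1$ folklore argument of \cite[Proposition 2.4]{CSSWW:Acta} (no $1/8$ threshold needed there), and it forces the correction unitary $w'$ into $P'\cap M=P'\cap N_1$, yielding the genuine inclusion $\N(P\subseteq M)\subseteq\N(P\subseteq N_1)$ and hence amenability of $\N(P\subseteq M)''$ with no promotion step at all. You should restructure your argument along these lines.
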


\begin{proof} 
Let $M$ and $N$ be   ${\mathrm{II}}_1$ factors acting
nondegenerately on a Hilbert space $\Hs$ with $d(M,N)<\gamma<1/3200$. We will assume that $N$ is solid, or strongly solid, and show that $M$ has the same property, so take a diffuse unital amenable subalgebra $P$ of $M$.  By Theorem
\ref{Injective}, there exists a unital von Neumann subalgebra $Q\subseteq N$
isomorphic to $P$ such that $d(P,Q)\leq 100\gamma$.   When $N$ is strongly solid, let $Q_1=\mathcal N(Q\subseteq N)''\subseteq N$, and when $N$ is solid, let $Q_1=(Q\cup (Q'\cap N))''\subseteq N$.  In both cases $Q_1$ is amenable. This is the hypothesis of strong solidity in the first case, while when $N$ is solid, $Q'\cap N$ is amenable, which implies that $Q_1$ is amenable as it is the von Neumann algebra generated by two commuting amenable subalgebras.   (One way to see this is via the equivalence of injectivity and hyperfiniteness, since certainly two commuting finite dimensional algebras generate another finite dimensional algebra).

Applying Theorem \ref{Injective} again gives a unitary $u\in (Q_1\cup M)''$ such that $uQ_1u^*\subseteq M$, $\|u-I_{\Hs}\|<150\gamma$ and $d(uQ_1u^*,Q_1)\leq 100\gamma$.  Thus $d(P,uQu^*)\leq d(P,Q)+2\|u-I_{\Hs}\|\leq 400\gamma$.  Since $400\gamma<1/8$, \cite[Theorem 4.1]{C:IJM} gives a unitary $u_1\in (P\cup uQu^*)''\subseteq M$ satisfying $u_1Pu_1^*=uQu^*$ and $\|u_1-I_\Hs\|\leq 7d(P,uQu^*)\leq 2800\gamma$ (here we have crudely estimated the function $\delta$ appearing in \cite[Theorem 4.1]{C:IJM}).

Now write $N_1=u_1^*uNu^*u_1$ so that $P=u_1^*uQu^*u_1$ is a subalgebra of $N_1$. Since $P'\cap N_1= u_1^*u(Q'\cap N)u^*u_1$,  $Q'\cap N\subseteq Q_1$, and $u_1\in M$, we have $P'\cap N_1\subseteq P'\cap M$.  As $u_1\in M$, 
\begin{align}
d(M,N_1)&=d(u_1Mu_1^*,uNu^*)=d(M,uNu^*)\notag\\
&\leq
d(M,N)+2\|u-I_{\Hs}\|<301\gamma.
\end{align}
By \cite[Lemma 2.16 (i)]{CCSSWW:Duke} (with $\delta =0$) we have $P'\cap M\subseteq_{301\gamma}P'\cap N_1$. So, as $301\gamma<1$,  $P'\cap M=P'\cap N_1$ (this is a folklore Banach
space argument, see
\cite[Proposition 2.4]{CSSWW:Acta} for the precise statement being used). In the case when $N$ is solid, $Q'\cap N$ is amenable, and hence $P'\cap M=P'\cap N_1=u_1^*u(Q'\cap N)u^*u_1$ is amenable.  This proves that $M$ is solid. 

In the case when $N$ is strongly solid, note that 
\begin{equation}
\N(P\subseteq N_1)''= u_1^*u\N(Q\subseteq N)''u^*u_1=u_1^*uQ_1u^*u_1\subseteq M.
\end{equation}
  Now take a unitary $v\in \N(P\subseteq M)$.  As $301\gamma<2^{-3/2}$, \cite[Lemma 3.4 (iii)]{CCSSWW:Duke} provides $v'\in
\N(P\subseteq N_1)\subseteq M$ with $\|v-v'\|\leq (4+2\sqrt{2})301\gamma$. We have
$(4+2\sqrt{2})301\gamma<1$, so
\cite[Proposition 3.2]{CCSSWW:Duke} gives 
unitaries $w\in P$ and $w'\in P'\cap\mathcal B(\Hs)$ satisfying $v'=vww'$.
Then $w'=w^*v^*v'\in P'\cap M$ since $w$, $v$, and $v'$ all lie in $M$. Thus
$w'\in P'\cap N_1\subseteq \N(P\subseteq N_1)''$. Then $v=v'w'^*w^*\in
\N(P\subseteq N_1)$, so that $\N(P\subseteq M)\subseteq \N(P\subseteq
N_1)$. Since $\N(P\subseteq N_1)''$ is amenable so too is its subalgebra $\N(P\subseteq M)''$. Thus $M$ is strongly solid. 
 \end{proof}
 
To conclude this section, we turn to Murray and von Neumann's property $\Gamma$.  Recall that a II$_1$ factor $M$ with trace $\tau$ has property $\Gamma$ if for any finite set $\{x_1,\cdots,x_n\}$ in $M$ and $\eps>0$, there exists a unitary $u\in M$ with $\tau(u)=0$ and $\|[x_i,u]\|_2<\eps$ (as is usual, $\|\cdot\|_2$ denotes the norm induced by the trace: $\|x\|_2=\tau(x^*x)^{1/2}$).  Equivalently (in the presence of a separable predual), property $\Gamma$ is characterised by the nontriviality of the central sequence algebra $M^\omega\cap M'$, where $\omega$ is a free ultrafilter (see \cite[Theorem XIV.4.7]{Tak:Book3}). For II$_1$
factors with nonseparable preduals this equivalence no longer holds (see \cite[Section
3]{FHS:BLMS}) and instead one must work with ultrafilters on sets of larger
cardinality.  For simplicity, we restrict to the separable predual situation
here. However the argument can be modified to handle the nonseparable
situation (with the same constants).  To reach our stability result we need an extension of \cite[Lemma 2.15]{CCSSWW:Duke}.
\begin{lemma}\label{Gamma.L}
Let $M$ and $N$ be   ${\mathrm{II}}_1$ factors represented nondegenerately
on a Hilbert
space $\Hs$ and let $\gamma$ and $\eta$ be positive constants. Suppose that
$d(M,N)<\gamma<1$ and that we have $x_1,x_2$ in the unit ball of
$M$ and $y_1,y_2$ in the unit ball of $N$ with $\|x_i-y_i\|\leq\eta$, $i=1,2$. 
Then
\begin{equation}\label{Gamma.L1}
\|y_1-y_2\|_{2,N}^2\leq\|x_1-x_2\|_{2,M}^2+8\eta+(8\sqrt{2}+8)\gamma.
\end{equation}
\end{lemma}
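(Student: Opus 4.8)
The plan is to rewrite both sides as traces of positive elements and then to compare the two traces $\tau_M$ and $\tau_N$ on nearby elements, importing this comparison from an extension of \cite[Lemma 2.15]{CCSSWW:Duke}. Set $a=y_1-y_2\in N$ and $b=x_1-x_2\in M$. Since $x_1,x_2$ lie in the unit ball of $M$ and $y_1,y_2$ in the unit ball of $N$, we have $\|a\|\leq 2$ and $\|b\|\leq 2$, while $\|x_i-y_i\|\leq\eta$ gives $\|a-b\|\leq\|y_1-x_1\|+\|y_2-x_2\|\leq 2\eta$. By the definition of the $2$-norms,
\[
\|y_1-y_2\|_{2,N}^2=\tau_N(a^*a),\qquad \|x_1-x_2\|_{2,M}^2=\tau_M(b^*b),
\]
so \eqref{Gamma.L1} reduces to the single inequality $\tau_N(a^*a)-\tau_M(b^*b)\leq 8\eta+(8\sqrt2+8)\gamma$.

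First I would account for the $8\eta$ term, which is purely metric and does not yet involve the traces. Writing $a^*a-b^*b=a^*(a-b)+(a-b)^*b$ and using $\|a\|,\|b\|\leq 2$ together with $\|a-b\|\leq 2\eta$ gives
\[
\|a^*a-b^*b\|\leq\|a\|\,\|a-b\|+\|a-b\|\,\|b\|\leq 8\eta,
\]
so $a^*a\in N$ and $b^*b\in M$ are positive elements of norm at most $4$ lying within $8\eta$ of one another in operator norm.

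It remains to compare the values of the two \emph{distinct} traces on these nearby elements, and this is the genuinely non-trivial step: a priori $\tau_M$ and $\tau_N$ are the canonical traces of two different factors, related only through the closeness of the algebras. The required comparison is an extension of \cite[Lemma 2.15]{CCSSWW:Duke} to self-adjoint elements of norm up to $4$, and gives $|\tau_N(a^*a)-\tau_M(b^*b)|\leq\|a^*a-b^*b\|+(8\sqrt2+8)\gamma$. The mechanism behind it, and the only place where $d(M,N)<\gamma$ is used in an essential way, is to Dixmier-average $b^*b$ over unitaries of $M$ to within $\varepsilon$ of $\tau_M(b^*b)I_\Hs$ in operator norm, to transport each averaging unitary into $N$ at a cost controlled by $\gamma$ (as in the estimate $\|u-v\|<\sqrt2\gamma$ used earlier), and then to evaluate $\tau_N$ on the resulting element of $N$, letting $\varepsilon\to 0$. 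Combined with $\|a^*a-b^*b\|\leq 8\eta$ this yields $\tau_N(a^*a)-\tau_M(b^*b)\leq 8\eta+(8\sqrt2+8)\gamma$, which is \eqref{Gamma.L1}.

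The main obstacle is precisely this cross-factor trace comparison; everything else is elementary operator-norm manipulation. Two features of the present quadratic statement explain why an \emph{extension} of the linear comparison \cite[Lemma 2.15]{CCSSWW:Duke} is needed rather than a direct citation: we must compare traces of the products $a^*a$ and $b^*b$ rather than of $a$ and $b$, and the relevant differences $a,b$ have norm up to $2$ rather than $1$. Both effects enlarge the operator norms in play, and hence the constants, producing the factors $8\eta$ and $(8\sqrt2+8)\gamma$ in place of the smaller constants of the single-element statement.
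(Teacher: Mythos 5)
Your argument is correct, and its skeleton matches the paper's: set $a=y_1-y_2$, $b=x_1-x_2$, peel off the $8\eta$ via the operator-norm estimate $\|a^*a-b^*b\|\leq 8\eta$, and then pay $O(\gamma)$ for a cross-factor comparison of $\tau_N(a^*a)$ with $\tau_M(b^*b)$. Where you diverge is in how that last comparison is carried out. The paper takes a state $\Phi$ on $\mathcal B(\Hs)$ extending $\tau_M$ and quotes \cite[Lemma 2.15]{CCSSWW:Duke} verbatim to get $|\tau_N(s^*s)-\Phi(s^*s)|\leq(2\sqrt2+2)\gamma\|s^*s\|\leq(8\sqrt2+8)\gamma$; since that bound already scales with $\|s^*s\|$, no extension of the cited lemma to elements of norm $4$ is actually required, contrary to what you suggest (the ``extension'' the paper advertises is the present quadratic lemma itself, not a strengthening of the trace-comparison lemma). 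You instead re-derive the comparison from scratch: Dixmier-average $b^*b$ to within $\varepsilon$ of $\tau_M(b^*b)I_\Hs$, replace each averaging unitary $u_i\in M$ by $v_i\in N$ with $\|u_i-v_i\|<\sqrt2\gamma$, and evaluate $\tau_N$ on $\sum_i\lambda_i v_i(a^*a)v_i^*$. This is sound; writing it out, each term costs $2\|u_i-v_i\|\,\|a^*a\|+\|a^*a-b^*b\|\leq 8\sqrt2\gamma+8\eta$, so your route is self-contained (it needs only the unitary-transfer lemma rather than the state-extension lemma) and in fact yields the slightly sharper bound $\|y_1-y_2\|_{2,N}^2\leq\|x_1-x_2\|_{2,M}^2+8\eta+8\sqrt2\,\gamma$. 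The paper's version is shorter because the Hahn--Banach/state-extension work is already packaged in the Duke lemma; yours buys independence from that lemma and a marginally better constant.
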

\begin{proof}
Define $s=y_1-y_2\in N$ and
$t=x_1-x_2\in M$, so that $\|s\|,\,\|t\|\leq 2$ and $\|s-t\|\leq 2\eta$. 
Let $\Phi$ be a state on $\mathcal B(\Hs)$ extending $\tau_M$. 
Then \cite[Lemma 2.15]{CCSSWW:Duke} gives
\begin{equation}
|\tau_N(s^*s)-\Phi(s^*s)|\leq
(2\sqrt{2}+2)\gamma\|s^*s\|\leq(8\sqrt{2}+8)\gamma.
\end{equation}
We also have
\begin{equation}\label{Gamma.L2}
|\Phi(s^*s)-\Phi(t^*t)|\leq\|(s^*-t^*)s+t^*(s-t)\|\leq 8\eta,
\end{equation}
so
\begin{align}
\|s\|^2_{2,N}&=\tau_N(s^*s)\leq |\Phi(s^*s)|+|\tau_N(s^*s)-\Phi(s^*s)|\notag\\
&\leq |\Phi(s^*s)-\Phi(t^*t)|+\Phi(t^*t)+(8\sqrt{2}+8)\gamma\notag\\
&\leq \|t\|^2_{2,M}+8\eta+(8\sqrt{2}+8)\gamma,
\end{align}
since $\Phi$ and $\tau_M$ agree on $M$. This is \eqref{Gamma.L1}. 
\end{proof}

\begin{proposition}\label{GammaProp}
Let $M$ and $N$ be   ${\mathrm{II}}_1$ factors with separable preduals acting
nondegenerately
on a Hilbert space
$\Hs$ with $d(M,N)<\gamma$ for a constant $\gamma<1/190$. Suppose that $M$ has
property
$\Gamma$.  Then $N$ also has property $\Gamma$.
\end{proposition}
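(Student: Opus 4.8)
The plan is to establish property $\Gamma$ for $N$ through the ultrapower criterion recalled above: it suffices to exhibit a non-scalar element of $N^\omega\cap N'$ for a fixed free ultrafilter $\omega$. The naive route---transferring a trace-zero almost-central unitary from $M$ directly into $N$ and applying Lemma \ref{Gamma.L} to its commutators---will not by itself suffice, since Lemma \ref{Gamma.L} carries an irreducible additive error of order $\gamma$. It only delivers a trace-zero unitary in $N$ whose commutators with a prescribed finite set have a \emph{fixed} size $O(\sqrt\gamma)$, and this cannot be refined to arbitrarily small commutators (a factor without property $\Gamma$ may have arbitrarily small spectral gap, so no fixed tolerance forces $\Gamma$). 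The main obstacle is therefore to convert the fixed-tolerance estimate that Lemma \ref{Gamma.L} supplies into genuine property $\Gamma$; my plan is to work in the ultrapower, where almost-centrality becomes \emph{uniform} over the whole unit ball, and then to invoke the Dixmier property to remove all dependence on a spectral gap.

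In detail, I would first use property $\Gamma$ of $M$ to fix a central sequence of trace-zero unitaries, giving a unitary $U=(u_k)^\omega\in M^\omega\cap M'$ with $\tau(U)=0$; thus $U$ commutes with every element of $M$ and $\|U\|_2=1$. Using $M\subset_\gamma N$, for each $k$ I choose $b_k\in N$ with $\|b_k-u_k\|\le\gamma$ and set $H'=(b_k)^\omega\in N^\omega$. Then $\|H'\|\le 1$, and \cite[Lemma 2.15]{CCSSWW:Duke} applied to $b_k$ and $b_k^*b_k$ gives $|\tau(H')|=O(\gamma)$ and $\tau(H'^*H')\ge 1-O(\gamma)$, so the witness retains variance $\|H'-\tau(H')1\|_{2,N^\omega}^2\ge 1-O(\gamma)$. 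The key point is that $H'$ is \emph{uniformly} almost central: given $y$ in the unit ball of $N$, approximate it by $x$ in the unit ball of $M$ with $\|x-y\|\le\gamma$ and apply Lemma \ref{Gamma.L} to the pairs $yb_k,\,b_ky\in N$ and $xu_k,\,u_kx\in M$ (after the harmless rescaling into the unit balls). Since the relevant differences are at most $\|x-y\|+\|b_k-u_k\|\le 2\gamma$, and since $\|[x,u_k]\|_{2,M}\to 0$ as $U$ is central, passing to the limit along $\omega$ yields $\|[y,H']\|_{2,N^\omega}\le\kappa$ with $\kappa=\kappa(\gamma)\to 0$ as $\gamma\to 0$ and \emph{independent of} $y$. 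It is exactly the passage to $M^\omega\cap M'$, where $U$ commutes with all of $M$, that makes this uniformity available after a single $\gamma$-dependent loss.

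Finally I would exploit that $N^\omega$ is a finite factor carrying a trace-preserving conditional expectation $E$ onto $N^\omega\cap N'$. A standard Dixmier-type argument places $E(H')$ in the $\|\cdot\|_2$-closed convex hull of $\{uH'u^*:u\in\U(N)\}$, so that $\|H'-E(H')\|_{2,N^\omega}\le\sup_{u\in\U(N)}\|[u,H']\|_{2,N^\omega}\le\kappa$. Since $E$ preserves the trace, $E(H')\in N^\omega\cap N'$ satisfies $\|E(H')-\tau(H')1\|_{2,N^\omega}\ge\|H'-\tau(H')1\|_{2,N^\omega}-\kappa\ge\sqrt{1-O(\gamma)}-\kappa$, which is strictly positive once $\gamma<1/190$. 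Thus $E(H')$ is a non-scalar element of $N^\omega\cap N'$, and $N$ has property $\Gamma$. The decisive feature of routing the bound through $E$ and the Dixmier property is that averaging over \emph{all} of $\U(N)$ produces a constant with no hidden factor-dependence---precisely what a direct construction of an almost-central unitary cannot provide. Verifying the uniform estimate of the previous step with sharp enough constants is, I expect, where the real work (and the numerical threshold $1/190$) resides.
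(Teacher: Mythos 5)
Your proposal is correct and follows essentially the same route as the paper's proof: transfer the trace-zero central sequence from $M$ to $N$ term by term, use Lemma \ref{Gamma.L} to get a uniform $\|\cdot\|_2$-bound on commutators with (unitaries in) $N$, average over $\mathcal U(N)$ to land in $N'\cap N^\omega$, and rule out scalarity by comparing the $2$-norm with the trace. The only differences are cosmetic: the paper approximates each $u_n$ by a \emph{unitary} $v_n\in N$ via \cite[Lemma 2.12]{CCSSWW:Duke} and takes the unique minimal-$\|\cdot\|_2$-norm element of $\overline{\mathrm{conv}}^{2}\{wvw^*:w\in\mathcal U(N)\}$, which is exactly your $E(H')$.
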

\begin{proof}
Suppose that $M$ has property $\Gamma$ and fix a free ultrafilter
$\omega$ on $\mathbb N$. By definition, there is a sequence $(u_n)_{n=1}^\infty$ of trace zero unitaries such that $u=(u_n)$ defines an element in $M^\omega\cap M'$.  For each $n$, use \cite[Lemma 2.12]{CCSSWW:Duke} to find a unitary
$v_n\in N$ with $\|u_n-v_n\|<\sqrt{2}\gamma$ and let $v$ denote the class of
$(v_n)$ in $N^\omega$.  Let $\Phi$ denote a state on ${\mathcal{B}}(\Hs)$
extending
$\tau_N$. Then \cite[Lemma 2.15]{CCSSWW:Duke} gives the estimate
\begin{equation}
|\tau_M(u_n)-\Phi(u_n)|\leq (2\sqrt{2}+2)\gamma,\quad n\in\mathbb N, 
\end{equation}
so that
\begin{equation}
|\tau_M(u_n)-\tau_N(v_n)|\leq |\tau_M(u_n)-\Phi(u_n)|+|\Phi(u_n)-\Phi(v_n)|\leq(
3\sqrt{2}+2)\gamma.
\end{equation}
Thus
\begin{equation}\label{Gamma.E1}
|\tau_{N^\omega}(v)|\leq (3\sqrt{2}+2)\gamma.
\end{equation}

Given a unitary $w\in N$, use \cite[Lemma 2.12]{CCSSWW:Duke} to find a unitary $w'\in M$
with $\|w'-w\|<\sqrt{2}\gamma$. Then 
\begin{equation}
\|w'u_n-wv_n\|\leq \|(w'-w)u_n\|+\|w(u_n-v_n)\|\leq 2\sqrt{2}\gamma
\end{equation}
and similarly $\|u_nw'-v_nw\|\leq 2\sqrt{2}\gamma$. Taking $\eta=2\sqrt{2}\gamma$
in  Lemma \ref{Gamma.L} with $x_1=w'u_n$, $x_2=u_nw'$, $y_1=wv_n$ and $y_2=v_nw$
gives
\begin{equation}
\|wv_n-v_nw\|_{2,N}^2\leq\|w'u_n-u_nw'\|_{2,M}^2+(24\sqrt{2}+8)\gamma.
\end{equation}
Since $\lim_{n\rightarrow\omega}\|w'u_n-u_nw'\|_{2,M}=0$, we have the estimate
\begin{equation}\label{T1.1}
\|wvw^*-v\|_{2,N^\omega}^2=\|wv-vw\|_{2,N^\omega}^2\leq (24\sqrt{2}+8)\gamma
\end{equation}
 in $N^\omega$. Let $y$ be the unique element of minimal $\|\cdot\|_{2,N^\omega}$-norm in
$\overline{{\mathrm{conv}}}^{2,N^\omega}\{wvw^*:w\in\mathcal U(N)\}$. This lies
in $N^\omega$ and uniqueness ensures that $y\in N'\cap N^\omega$. It remains to check
that $y$ is nontrivial.  

The estimate (\ref{T1.1}) gives
\begin{equation}\label{Gamma.E2}
\|y-v\|_{2,N^\omega}^2\leq (24\sqrt{2}+8)\gamma,
\end{equation}
and so
\begin{equation}
\|y\|_{2,N^{\omega}}\geq 1-((24\sqrt{2}+8)\gamma)^{1/2}
\end{equation}
as $\|v\|_{2,N^\omega}=1$. We can estimate
\begin{align}
|\tau_{N^\omega}(y)|&\leq|\tau_{N^\omega}(v)|+|\tau_{N^\omega}(y-v)|
\leq(3\sqrt{2}+2)\gamma+\|y-v\|_{2,N^\omega}\nonumber\\
&\leq(3\sqrt{2}+2)\gamma+((24\sqrt{2}+8)\gamma)^{1/2},
\end{align}
using (\ref{Gamma.E1}), (\ref{Gamma.E2}) and the Cauchy-Schwarz inequality.
If $y\in\mathbb CI_{N^\omega}$, then $y=\tau_{N^\omega}(y)I_{N^\omega}$ so 
$\|y\|_{2,N^\omega}=|\tau_{N^\omega}(y)|$, and it follows that
\begin{equation}
1-((24\sqrt{2}+8)\gamma)^{1/2}\leq
\|y\|_{2,N^{\omega}}\leq(3\sqrt{2}+2)\gamma+((24\sqrt{2}+8)\gamma)^{1/2}.
\end{equation}
Direct computations show that this is a contradiction when $\gamma<1/190$, so
that
$y$ is a nontrivial element of $N'\cap N^\omega$. Therefore $N$ has property $\Gamma$.
\end{proof}

\begin{remark}
As a consequence of the results of this section, factors close to free group factors inherit a number of their properties. Assume $d(M,L\mathbb F_2)$ is sufficiently small. Then $M$ is strongly solid by Proposition \ref{SSolid} and \cite{OP:Ann}, and every masa in $M$ has infinite multiplicity (i.e. unbounded Puk\'anszky invariant) by Proposition \ref{MasaProp} and \cite{D:MA}. Further, there are masas $A_1$ and $A_2$ in $M$ close to the generator masas in $L\mathbb F_2$ and a masa $B$ close to the radial masa in $L\mathbb F_2$.  These are singular with Puk\'anzksy invariant $\{\infty\}$ by Proposition \ref{MasaProp} and \cite{D:Ann,R:PJM,SS:Book2}. The embedding theorem was used in Proposition \ref{SSolid} and can be employed in a similar way to establish maximal injectivity of $A_1$, $A_2$ and $B$ in $M$ since their counterparts in $L\mathbb{F}_2$ are known to be maximal injective  \cite{P:Adv,CFRW:JLMS}.
\end{remark}

\section{Tensor products}\label{S3}

 In \cite[Section 5]{CCSSWW:Duke} we considered McDuff factors (those which absorb the hyperfinite II$_1$ factor tensorially), showing that this property transfers to sufficiently close factors. In this section, we examine general tensor product factorisations, transferring these to close factors. If $P$ and $Q$ are II$_1$ factors, then $M:=P\vnotimes Q$ is generated by two commuting infinite dimensional subalgebras. As shown in \cite{NT:PJA}, this characterises the property of being isomorphic to a tensor product: if $M$ is a II$_1$ factor generated by two commuting infinite dimensional von Neumann subalgebras $S$ and $T$, then $M$ is isomorphic to $S\vnotimes T$, and $S$ and $T$ are automatically II$_1$ factors. This result will prove useful below.

We begin with a technical observation.

\begin{lemma}\label{lemTP.2}
Let $\gamma >0$ and suppose that $M,N\subseteq \mathcal B(\Hs)$ are von Neumann algebras acting nondegenerately on a Hilbert space $\Hs$ such that $d(M,N)< \gamma$. Let $A\subseteq M'\cap N'$ be an abelian von Neumann
algebra. Then
$d((M\cup A)'',(N\cup A)'')<\gamma$.
\end{lemma}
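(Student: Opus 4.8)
The plan is to exhibit, for each $z$ in the unit ball of $(M\cup A)''$, an element $w$ in the unit ball of $(N\cup A)''$ with $\|z-w\|\leq\gamma_0$, where $\gamma_0<\gamma$ is chosen so that $d(M,N)<\gamma_0$; by the symmetry of the hypothesis this suffices. The structural fact driving the argument is that $A$, being abelian and contained in $M'\cap N'$, lies in the \emph{centre} of both $(M\cup A)''$ and $(N\cup A)''$. This centrality is what will let me control norms exactly and recover the same constant $\gamma$.

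First I would reduce a general $z$ to a manageable form. By the Kaplansky density theorem the unit ball of the $*$-algebra $\operatorname{span}\{ma:m\in M,\ a\in A\}$ is $\sigma$-strongly dense in the unit ball of $(M\cup A)''$, so I get unit-ball approximants $t_k\to z$ $\sigma$-strongly. Since simple functions are norm-dense in $A\cong L^\infty$, each $t_k$ can be approximated in norm by an \emph{$A$-step function} $\sum_j \tilde m_j p_j$, where $\{p_j\}$ is a finite orthogonal family of projections in $A$ with $\sum_j p_j=I_\Hs$ and $\tilde m_j\in M$ (obtained by refining the projections coming from the simple-function approximations to the $a_i$). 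The one delicate point is that I want the coefficients to lie in the unit ball of $M$: here I would use that, because $p_j\in A\subseteq M'$, the reduction map $m\mapsto mp_j$ is a surjective normal $*$-homomorphism from $M$ onto the induced algebra, and such a map carries the unit ball of $M$ \emph{onto} the unit ball of its image. Since $\|\tilde m_j p_j\|=\|t_kp_j\|\leq 1$, I may therefore replace $\tilde m_j$ by some $m_j\in M$ with $\|m_j\|\leq 1$ and $m_jp_j=\tilde m_jp_j$, leaving the operator $t_k=\sum_j m_jp_j$ unchanged.

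Next I perturb coefficient-by-coefficient. As $\|m_j\|\leq 1$ and $d(M,N)<\gamma_0$, I choose $n_j\in N$ in the unit ball with $\|m_j-n_j\|<\gamma_0$ and set $s_k=\sum_j n_jp_j\in(N\cup A)''$. The orthogonality of the central projections $p_j$ is the crucial gain: for any family $\{y_jp_j\}$ with the $p_j$ orthogonal and central one has $\|\sum_j y_jp_j\|=\max_j\|y_jp_j\|$, so $\|s_k\|\leq\max_j\|n_j\|\leq 1$ and, simultaneously, $\|t_k-s_k\|=\max_j\|(m_j-n_j)p_j\|<\gamma_0$. Thus no error accumulates over the (arbitrarily many) terms. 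Finally, since the unit ball of $(N\cup A)''$ is $\sigma$-weakly compact, I pass to a subnet along which $s_k\to w$ $\sigma$-weakly with $w$ in that unit ball; as $t_k\to z$ $\sigma$-weakly and the norm is $\sigma$-weakly lower semicontinuous, $\|z-w\|\leq\liminf\|t_k-s_k\|\leq\gamma_0<\gamma$. Running the same argument with the roles of $M$ and $N$ exchanged gives $d((M\cup A)'',(N\cup A)'')\leq\gamma_0<\gamma$.

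The main obstacle is entirely in preserving the constant: a naive approximation by $\sum_i m_ia_i$ followed by perturbing each $m_i$ would blow the error up by the number of terms and by $\|a_i\|$, and would not keep the result in the unit ball. Both difficulties are resolved by forcing the approximants into orthogonal central-step form (so that estimates become maxima rather than sums) and by invoking the reduction homomorphism $m\mapsto mp_j$ to retain unit-ball coefficients; the soft $\sigma$-weak compactness and lower-semicontinuity argument then replaces the norm convergence that is genuinely unavailable here.
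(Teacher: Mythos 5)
Your argument is correct and follows essentially the same route as the paper's proof: reduce to finite sums $\sum_j m_j p_j$ over orthogonal projections $p_j\in A$ with unit-ball coefficients $m_j\in M$, perturb each coefficient into $N$ so that orthogonality turns the error into a maximum rather than a sum, and pass to the weak closure via Kaplansky density. The only difference is cosmetic — you supply the justification (surjectivity of the reduction $m\mapsto mp_j$ on unit balls) for the unit-ball normalisation of the coefficients, which the paper asserts without comment, and you replace the citation of \cite[Lemma 5]{KK:AJM} by the standard $\sigma$-weak compactness and lower semicontinuity argument it encodes.
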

\begin{proof}
Choose $\gamma'$ to satisfy $d(M,N)< \gamma' < \gamma$.
Let $B\subseteq A$ be the span of the projections in $A$, so that $B$ is a $\ast$-subalgebra of $A$. If $x\in {\mathrm{Alg}}(M\cup B)$, $\|x\|\leq 1$, then there exist
orthogonal projections $p_1,\ldots,p_n\in B$ and elements $x_1,\ldots, x_n\in M$ with $\|x_i\|\leq 1$ so that $x=\sum_{i=1}^n x_ip_i$. Choose elements $y_1,\ldots,y_n\in
N$ so that $\|y_i\|\leq 1$ and $\|x_i-y_i\|\leq \gamma'$, and let $y=\sum_{i=1}^n y_ip_i \in {\mathrm{Alg}}(N\cup B)$. Then
$\|y\|\leq 1$ and
\begin{equation}\label{eq1}
\|x-y\|=\|\sum_{i=1}^n (x_i-y_i)p_i\|=\max\{\|x_i-y_i\|: 1\leq i\leq n\}\leq \gamma'.
\end{equation}
The argument is symmetric in $M$ and $N$, so $d(C^*(M\cup B),C^*(N\cup B))\leq \gamma'$. The result follows from the Kaplansky density theorem via \cite[Lemma 5]{KK:AJM}.
\end{proof}

%Let $M$ be a II$_1$ factor acting on a Hilbert space $\Hs$. Given a nonzero vector $\eta\in\Hs$, write
%$e^M_\eta$ for the projection in $M'$ onto the subspace $\overline{M\eta}$ and
%$e^{M'}_\eta$ for the projection in $M$ onto $\overline{M'\eta}$.  If $M'$ is
%also a finite factor, with faithful normalized trace $\tau_{M'}$, then the
%quantity $\dim_M(\Hs)=\tau_M(e^{M'}_\eta)/\tau_{M'}(e^M_\eta)$ is independent of
%the choice of nonzero $\eta\in\Hs$ (see \cite[Part III Chapter 6]{Dix:Book}
 %for example). When $M'$ is a   II$_\infty$ factor, set
%$\dim_M(\Hs)=\infty$.

The next lemma takes a tensor product factor $M=P\vnotimes Q$ acting on $\Hs_1\otimes\Hs_2$ and considers close factors $N$ generated by commuting II$_1$ factors $S$ and $T$ which are assumed close to $P$ and $Q$ respectively.  The lemma shows that, provided we have a reverse near containment of $M'$ into $N'$, then we can make a small unitary perturbation of $N$, $S$ and $T$ so that $S$ can be viewed as acting on $\Hs_1$ and $T$ on $\Hs_2$.
\begin{lemma}\label{lemTP.4}
Let $P\subseteq \mathcal B(\Hs_1)$ and $Q\subseteq \mathcal B(\Hs_2)$ be II$_1$ factors, let $\Hs=\Hs_1\otimes \Hs_2$, and let $M=P\vnotimes Q$. Suppose that $N\subseteq \mathcal B(\Hs)$ is a II$_1$ factor and has two commuting subfactors
$S$ and $T$ so that
\begin{equation}\label{eqt.3.1}
d(M,N),\ d(P\otimes I_{\Hs_2},S),\ d(I_{\Hs_1}\otimes Q,T)<\lambda,\quad M'\subset_{k\lambda} N',
\end{equation}
for constants $\lambda,k>0$ satisfying
\begin{equation}\label{NewEq.1}
(90301+27180600k)\lambda<1/100.
\end{equation}
Then there exists a unitary $u\in \mathcal B(\Hs)$ such that
\begin{equation}\label{eqt.3.2}
\|I_\Hs -u\|<150(90602+27271202k)\lambda,
\end{equation}
$u^*Su\subseteq \mathcal B(\Hs_1)\otimes I_{\Hs_2}$, $u^*Tu\subseteq I_{\Hs_1}\otimes\mathcal B(\Hs_2)$, and
\begin{equation}\label{eqt.3.3}
d(M,u^*Nu),\ d(P,u^*Su),\ d(Q,u^*Tu)\leq (27180601+8181360600k)\lambda.
\end{equation}
\end{lemma}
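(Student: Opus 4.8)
The plan is to straighten the tensor decomposition by perturbing the two commuting injective type~I factors $R_1:=\mathcal B(\Hs_1)\otimes I_{\Hs_2}$ and $R_2:=I_{\Hs_1}\otimes\mathcal B(\Hs_2)$, which satisfy $R_1'=R_2$. Since $P$ and $Q$ are factors, $(P\cup(P'\cap\mathcal B(\Hs_1)))''=\mathcal B(\Hs_1)$, so $R_1$ is generated by the commuting pieces $P\otimes I_{\Hs_2}\subseteq M$ and $(P'\cap\mathcal B(\Hs_1))\otimes I_{\Hs_2}\subseteq M'$, and symmetrically $R_2$ is generated by $I_{\Hs_1}\otimes Q\subseteq M$ and $I_{\Hs_1}\otimes(Q'\cap\mathcal B(\Hs_2))\subseteq M'$. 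The two required containments $u^*Su\subseteq R_1$ and $u^*Tu\subseteq R_2$ amount to producing a single type~I factor $uR_1u^*$, a small perturbation of $R_1$, with $S\subseteq uR_1u^*$ and $T\subseteq(uR_1u^*)'=uR_2u^*$. Because $R_1$ and $R_2$ are injective, the embedding theorem (Theorem~\ref{Injective}) is the natural engine, and the reverse near containment $M'\subset_{k\lambda}N'$ is exactly what lets us push the commutant pieces into $N'$.

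The first step is to set up the near inclusions $R_1\subset_c T'$ and $R_2\subset_c S'$ (commutants in $\mathcal B(\Hs)$), with $c=(90301+27180600k)\lambda$. Here $S\subseteq T'$ and $N'\subseteq T'$ because $S$ commutes with $T$ and $T\subseteq N$; likewise $T,N'\subseteq S'$. Thus $P\otimes I_{\Hs_2}\subset_\lambda S\subseteq T'$ from $d(P\otimes I_{\Hs_2},S)<\lambda$, while $(P'\cap\mathcal B(\Hs_1))\otimes I_{\Hs_2}\subseteq M'\subset_{k\lambda}N'\subseteq T'$, and symmetrically for the generators of $R_2$. Combining the two \emph{commuting} generator near inclusions into a near inclusion of the generated type~I factor is the technical heart: near inclusions do not automatically pass to generated von Neumann algebras, so this step genuinely uses the injectivity of $R_1$ together with the composition rule~\eqref{containeq}, and it is where the factor $k$ and the bulk of the constant $c$ enter.

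With these near inclusions in hand, and noting $c<1/100$ by~\eqref{NewEq.1}, I would apply Theorem~\ref{Injective} to $R_1\subset_c T'$ to obtain a near-identity unitary $u$ with $uR_1u^*\subseteq T'$. The decisive point is the commutant trick: $uR_1u^*\subseteq T'$ forces $T=T''\subseteq(uR_1u^*)'=uR_2u^*$, giving $u^*Tu\subseteq R_2$ \emph{exactly}. The companion containment $u^*Su\subseteq R_1$ would come from the symmetric inclusion $R_2\subset_c S'$, and the main obstacle is to realise both with a \emph{single} unitary: fixing one factor exactly leaves the other only approximately straightened, and the obvious corrections are blocked, since a unitary normalising $R_1$ (equivalently $R_2=R_1'$) cannot move a proper subfactor into $R_1$, while enlarging a type~I factor to swallow a subfactor via the commutant trick would require embedding a commutant such as $Q'\cap\mathcal B(\Hs_2)$, which need not be injective. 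I expect the resolution to be a coordinated pair of applications of the embedding theorem, carried out in the successively conjugated pictures and composed, with the commutant near inclusion constant inflated by a factor of order $301$ at each conjugation via~\eqref{containeq}; this is what produces the powers of $301$ in the stated constants, namely the threshold $90301+27180600k=(301^2-300)+300(301^2+1)k$ and the perturbation bound $150(90602+27271202k)=150\big((301^2+1)+(301^3+301)k\big)$.

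Finally I would record the distance estimates. Conjugating $N$, $S$ and $T$ by $u$ and applying the triangle inequality to the three closeness hypotheses $d(M,N),\,d(P\otimes I_{\Hs_2},S),\,d(I_{\Hs_1}\otimes Q,T)<\lambda$, each of $d(M,u^*Nu)$, $d(P,u^*Su)$ and $d(Q,u^*Tu)$ is controlled by the corresponding initial distance plus a multiple of $\|I_\Hs-u\|$; inserting the perturbation bound and using $\lambda\le c$ collapses this to $301c=(27180601+8181360600k)\lambda$, matching~\eqref{eqt.3.3}. The leading factor $301$ reflects the effect of conjugation by a near-identity unitary, exactly as in the passage $\gamma\mapsto301\gamma$ used earlier in the paper.
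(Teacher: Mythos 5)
Your proposal correctly identifies the engine (the embedding theorem applied to amenable algebras, driven by the hypothesis $M'\subset_{k\lambda}N'$), but the step you yourself flag as ``the technical heart'' is a genuine gap, and it is not the route the paper takes. You want to deduce $R_1=\mathcal B(\Hs_1)\otimes I_{\Hs_2}\subset_c T'$ from the two near inclusions $P\otimes I_{\Hs_2}\subset_\lambda S\subseteq T'$ and $(P'\cap\mathcal B(\Hs_1))\otimes I_{\Hs_2}\subseteq M'\subset_{k\lambda}N'\subseteq T'$ of commuting generating pieces. No such passage to the generated algebra is available: for $x=\sum_i x_iy_i$ in the unit ball of $(X\cup Y)''$ there is no control on $\sum_i\|x_i\|\,\|y_i\|$, the composition rule \eqref{containeq} only concatenates near inclusions in series, and injectivity of the join does not rescue this. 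The paper's Lemma~\ref{lemTP.2} is the only tool of this kind in the paper and it applies strictly to an \emph{abelian} algebra $A$ contained \emph{exactly} in both commutants (its proof decomposes elements over orthogonal projections of $A$ so that the norm of a sum is a maximum); the piece $(P'\cap\mathcal B(\Hs_1))\otimes I_{\Hs_2}$ is a type II factor in general, so this lemma does not apply to it. The second gap is the single-unitary issue, which you correctly identify as the main obstacle but then defer to ``a coordinated pair of applications\dots composed'', which is precisely what needs to be exhibited.

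The paper circumvents both problems by never near-including a full commutant or a full type I factor. It chooses a \emph{masa} $A\subseteq(P'\cap\mathcal B(\Hs_1))\otimes I_{\Hs_2}$, pushes $A$ into $N'$ by the embedding theorem (using $M'\subset_{k\lambda}N'$ and amenability of $A$), and then applies Lemma~\ref{lemTP.2} with this abelian $A$ to get $d(((P\otimes I_{\Hs_2})\cup A)'',(S_1\cup A)'')$ small. The algebra $((P\otimes I_{\Hs_2})\cup A)''$ is amenable because its commutant is $(A\cup(I_{\Hs_1}\otimes\mathcal B(\Hs_2)))''$, so the embedding theorem conjugates it \emph{onto} $(S_1\cup A)''$, and the containment $S_2\subseteq((P\otimes I_{\Hs_2})\cup A)''\subseteq\mathcal B(\Hs_1)\otimes I_{\Hs_2}$ is then immediate --- no commutant trick of the form $uR_1u^*\subseteq T'\Rightarrow u^*Tu\subseteq R_2$ is used. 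The analogous two steps for $T$ use a masa $B\subseteq I_{\Hs_1}\otimes(Q'\cap\mathcal B(\Hs_2))$, and the key point you are missing is that the resulting unitaries $u_3,u_4$ lie in von Neumann algebras generated by $B$, $N_2'$, $I_{\Hs_1}\otimes Q$ and $T_3$, all of which commute with the already-straightened $S_2$, so $S_2$ is left fixed. Your numerology $90301=301^2-300$ etc.\ is reverse-engineered from the four-unitary composition $u=u_1u_2u_3u_4$ rather than derived from your proposed two-step scheme, which as it stands cannot be completed.
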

\begin{proof}
Let $A$ be a masa in $(P'\cap \mathcal B(\Hs_1))\otimes I_{\Hs_2}$. Then $A\subseteq M'\subset_{k\lambda}N'$. Since $k\lambda <1/100$ there exists, by the embedding theorem (Theorem \ref{Injective}), a unitary $u_1\in
(A\cup N')''$ such that $u_1Au_1^*\subseteq N'$ and $\|I_\Hs -u_1\|\leq 150k\lambda$. Let $N_1=u_1^*Nu_1$, $S_1=u_1^*Su_1$ and $T_1=u_1^*Tu_1$.
Then $A\subseteq (P\otimes I_{\Hs_2})'\cap S_1'$ and
\begin{equation}\label{eqt.3.4}
d(M,N_1),\ d(P\otimes I_{\Hs_2},S_1),\ d(I_{\Hs_1}\otimes Q,T_1)< (1+300k)\lambda,\quad M'\subset_{301k\lambda}N_1'.
\end{equation}
By Lemma \ref{lemTP.2},
\begin{equation}\label{eqt.3.5}
d(((P\otimes I_{\Hs_2})\cup A)'',(S_1\cup A)'')< (1+300k)\lambda,
\end{equation}
and $((P\otimes I_{\Hs_2})\cup A)''$ is amenable since $((P\otimes I_{\Hs_2})\cup A)'=(A\cup (I_{\Hs_1}\otimes \mathcal B(\Hs_2)))''$, which is amenable. By the embedding theorem (Theorem \ref{Injective}) there is a unitary $u_2\in (((P\otimes I_{\Hs_2})\cup A)''\cup (S_1\cup A)'')''$ such
that $u_2((P\otimes I_{\Hs_2})\cup A)''u_2^*=(S_1\cup A)''$ and $\|I_\Hs-u_2\|\leq 150(1+300k)\lambda$. Let $N_2=u_2^*N_1u_2$, $S_2=u_2^*S_1u_2$ and
$T_2=u_2^*T_1u_2$. Then
\begin{equation}\label{eqt.3.6}
d(M,N_2),\ d(P\otimes I_{\Hs_2},S_2),\ d(I_{\Hs_1}\otimes Q,T_2)\leq 301(1+300k)\lambda,\quad M'\subset_{(300+90301k)\lambda}N_2'.
\end{equation}
Moreover,
\begin{equation}\label{eqt.3.7}
S_2\subseteq u_2^*(S_1\cup A)''u_2=((P\otimes I_{\Hs_2})\cup A)''\subseteq \mathcal B(\Hs_1)\otimes I_{\Hs_2}.
\end{equation}

Now choose a masa $B\subseteq I_{\Hs_1}\otimes (Q'\cap \mathcal B(\Hs_2))$. Then $B\subseteq M'\subset_{(300+90301k)\lambda}N_2'$. The estimate (\ref{NewEq.1}) allows the embedding theorem (Theorem \ref{Injective}) to be applied to give a unitary $u_3\in
(B\cup N_2')''$ so that $u_3Bu_3^*\subseteq N_2'$ and $\|I_\Hs-u_3\|\leq 150(300+90301k)\lambda$. Let $N_3=u_3^*N_2u_3$, and $T_3=u_3^*T_2u_3$, and note that $S_2=u_3^*S_2u_3$ since $S_2$ commutes with $B$ and $N_2'$.
We also have the estimates
\begin{equation}\label{eqt3.8}
d(M,N_3),\ d(Q,T_3)\leq (90301+27180600k)\lambda.
\end{equation}
By construction, $B\subseteq (I_{\Hs_1}\otimes Q)'\cap T_3'$, so by Lemma \ref{lemTP.2} and the inequality \eqref{NewEq.1},
\begin{equation}\label{eqt3.9}
d(((I_{\Hs_1}\otimes Q)\cup B)'', (T_3\cup B)'')\leq (90301+27180600k)\lambda<1/100.
\end{equation}

As $((I_{\Hs_1}\otimes Q)\cup B)''$ is amenable (it is the commutant of the amenable algebra $((\mathcal B(\Hs_1)\otimes I_{\Hs_2})\cup B)''$), Theorem \ref{Injective} gives a unitary $u_4\in (((I_{\Hs_1}\otimes Q)\cup B)''\cup (T_3\cup B)'')''$ with the property that
$u_4((I_{\Hs_1}\otimes Q)\cup B)''u_4^*=(T_3\cup B)''$ and $\|I_\Hs-u_4\|\leq 150(90301+27180600k)\lambda$. Since $S_2$ commutes with 
$I_{\Hs_1}\otimes Q$, $B$ and $T_3$, we see that
$u_4^*S_2u_4=S_2$. Also 
\begin{equation}\label{e2.12}
u_4^*T_3u_4\subseteq ((I_{\Hs_1}\otimes Q)\cup B)''\subseteq I_{\Hs_1}\otimes \mathcal B(\Hs_2).
\end{equation}
 Consequently the desired unitary $u$ is $u_1u_2u_3u_4$, and
\begin{equation}\label{eqt.3.10}
\|I_\Hs-u\|\leq \sum_{i=1}^4 \|I_\Hs-u_i\|\leq 150(90602+27271202k)\lambda
\end{equation}
from previous estimates, while
\begin{equation}\label{eqt.3.11}
d(M,u^*Nu),\ d(P\otimes I_{\Hs_2},u^*Su),\ d(I_{\Hs_1}\otimes Q,u^*Tu)\leq (27180601+8181360600k)\lambda.
\end{equation}
We have $u^*Su\subseteq \mathcal B(\Hs_1)\otimes 1_{\Hs_2}$ from (\ref{eqt.3.7}) as $u_3$ and $u_4$ commute with $S_2=u_2^*u_1^*Su_1u_2$ and $u^*Tu\subseteq I_{\Hs_1}\otimes\mathcal B(\Hs_2)$ from (\ref{e2.12}).
\end{proof}

Lemma 2.16 of \cite{CCSSWW:Duke} considers near containments of relative commutants.  We will use the following version of this lemma in the context of distance estimates.  The proof is identical to part (i) of \cite[Lemma 2.16]{CCSSWW:Duke}, noting that if the $y\in N$ in the proof of that lemma lies in the unit ball, then this is also true for the approximating elements $E_{Q'\cap N}(y)$. 
\begin{lemma}[{c.f. \cite[Lemma 2.16(i)]{CCSSWW:Duke}}]\label{NewLem}
Let $M$ and $N$ be II$_1$ factors acting nondegenerately on a Hilbert space and suppose that $P\subseteq M$ and $Q\subseteq N$ are unital von Neumann subalgebras.  Then $d(P'\cap M,Q'\cap N)\leq d(M,N)+2\sqrt{2}d(P,Q)$.
\end{lemma}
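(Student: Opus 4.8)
The plan is to verify the distance estimate directly from the definition of the Kadison--Kastler metric, working one inclusion at a time and appealing to symmetry. Fix constants $\gamma>d(M,N)$ and $\delta>d(P,Q)$, and take an element $x$ in the unit ball of $P'\cap M$. Since $x\in M$ and $d(M,N)<\gamma$, the metric furnishes $y_0$ in the unit ball of $N$ with $\|x-y_0\|<\gamma$. This $y_0$ need not commute with $Q$, so I would set $y:=E_{Q'\cap N}(y_0)$, the image under the trace-preserving conditional expectation onto the relative commutant. Because $E_{Q'\cap N}$ is unital and completely positive, hence contractive, $\|y\|\le\|y_0\|\le 1$, so $y$ lies in the unit ball of $Q'\cap N$; this is precisely the observation flagged before the statement, and it is the only point where the argument departs from the near-inclusion version in \cite[Lemma 2.16(i)]{CCSSWW:Duke}.

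The heart of the matter is then to bound $\|x-y\|$. For each unitary $v\in\U(Q)$, I would use \cite[Lemma 2.12]{CCSSWW:Duke} to produce a unitary $v'\in\U(P)$ with $\|v-v'\|\le\sqrt{2}\delta$. Since $x\in P'\cap M$ commutes with $v'\in P$, we have $v'xv'^*=x$, and the splitting
\[
vy_0v^*-x=(v-v')y_0v^*+v'y_0(v^*-v'^*)+v'(y_0-x)v'^*
\]
gives $\|vy_0v^*-x\|\le 2\|v-v'\|+\|y_0-x\|\le 2\sqrt{2}\delta+\gamma$, uniformly in $v$. Thus $x$, which does not depend on $v$, is within $2\sqrt{2}\delta+\gamma$ of every conjugate $vy_0v^*$, and hence of every convex combination of such conjugates and of their norm closure.

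Finally, $E_{Q'\cap N}(y_0)$ lies in the closed convex hull of the orbit $\{vy_0v^*:v\in\U(Q)\}$ (the usual averaging/Dixmier-type description of the conditional expectation onto a relative commutant, exactly as used in \cite[Lemma 2.16]{CCSSWW:Duke}), so $\|x-y\|\le 2\sqrt{2}\delta+\gamma$. Interchanging the roles of $(M,P)$ and $(N,Q)$ yields the symmetric approximation, and letting $\gamma\downarrow d(M,N)$ and $\delta\downarrow d(P,Q)$ gives $d(P'\cap M,Q'\cap N)\le d(M,N)+2\sqrt{2}\,d(P,Q)$. I expect no genuine obstacle beyond bookkeeping: the two facts being leaned on---the contractivity of $E_{Q'\cap N}$ (which secures the unit-ball membership) and its realization through averaging over $\U(Q)$---are standard, and the rest is the triangle inequality.
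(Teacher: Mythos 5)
Your argument is correct and is precisely the route the paper takes: it reruns the averaging proof of \cite[Lemma 2.16(i)]{CCSSWW:Duke} (transfer a unitary $v\in\U(Q)$ to $v'\in\U(P)$, estimate $\|vy_0v^*-x\|$, then average) and adds exactly the observation the paper flags, namely that contractivity of $E_{Q'\cap N}$ keeps the approximant in the unit ball. The only quibble is that $E_{Q'\cap N}(y_0)$ lies in the $\|\cdot\|_2$-closed (equivalently, weak-operator-closed) convex hull of the orbit rather than its norm closure, but since the operator-norm ball of radius $2\sqrt{2}\delta+\gamma$ about $x$ is convex and closed in that topology, your estimate passes to the limit unchanged.
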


\begin{lemma}\label{lemTP.3}
Suppose that $M$ and $N$ are II$_1$ factors acting nondegenerately on a Hilbert space $\Hs$ and that $\dim_M \Hs =1$. If $d(M,N) < 1/(301\times 136209)=1/40998909$, then
$\dim_N\Hs =1$.
\end{lemma}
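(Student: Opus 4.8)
The plan is to recognise that $\dim_M\Hs=1$ is precisely the statement that $M$ is in standard form on $\Hs$, and to transfer this standardness to $N$. Recall that for a II$_1$ factor $N$ acting on $\Hs$ one has $\dim_N\Hs=1$ if and only if $N$ admits a vector that is simultaneously cyclic and separating, and, when $N'$ is finite, the Murray--von Neumann reciprocity $\dim_N\Hs\cdot\dim_{N'}\Hs=1$ holds. Since $\dim_M\Hs=1$ forces $M'$ to be a II$_1$ factor with $\dim_{M'}\Hs=1$, the whole situation is symmetric under passing to commutants. So it suffices to produce a cyclic and separating vector for $N$, equivalently to pin both $\dim_N\Hs$ and $\dim_{N'}\Hs$ to $1$.

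First I would check that the commutant side is well behaved. Every $z$ in the unit ball of $N'$ almost commutes with $M$: given $x\in M$, choose $y\in N$ with $\|x-y\|\le\gamma\|x\|$; since $zy=yz$, one gets $\|[z,x]\|\le 2\gamma\|x\|$, so $\ad(z)|_M$ is a derivation of $M$ into $\B(\Hs)$ of norm at most $2\gamma$. The inner-derivation theorem for von Neumann algebras yields $a\in\B(\Hs)$ with $\ad(a)=\ad(z)$ on $M$ and $\|a\|=O(\gamma)$; then $z-a\in M'$, so $N'\subseteq_{O(\gamma)}M'$, and symmetrically $d(M',N')=O(\gamma)$. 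The Kadison--Kastler transfer of the type decomposition then shows $N'$ is again a II$_1$ factor, so $\dim_N\Hs<\infty$, the reciprocity relation is available, and the pair $(M',N')$ satisfies the same hypotheses as $(M,N)$.

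Next, since a masa $A\subseteq M$ is amenable and $A\subseteq M\subset_\gamma N$, the embedding theorem (Theorem \ref{Injective}, applicable as $\gamma<1/100$) yields a unitary $u$ with $\|u-I_\Hs\|\le 150\gamma$ and $uAu^*\subseteq N$. Replacing $N$ by the unitary conjugate $N_1=u^*Nu$, which leaves the coupling constant unchanged ($\dim_{N_1}\Hs=\dim_N\Hs$), puts $A$ inside $M\cap N_1$ with $d(M,N_1)\le 301\gamma$; this is the origin of the factor $301$ in the constant, and the masa-transfer results of the previous section keep $A$ a masa in $N_1$. With $A$ a common masa of the two close factors $M$ and $N_1$, the coupling constant of each can be read off from the relative position of $A$ in $\Hs$, concretely by comparing $\Hs$ with $L^2(M)$ as modules over $A$ and using that in standard form $A\vee J_MAJ_M$ is maximal abelian in $\B(\Hs)$. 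Once the ambient distances fall below the stated threshold, the relevant projections and their traces for $N_1$ are forced to coincide with those for $M$, giving $\dim_{N_1}\Hs=\dim_M\Hs=1$ and hence $\dim_N\Hs=1$.

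I expect this last step to be the main obstacle. The coupling constant varies continuously, so closeness by itself only shows $\dim_N\Hs$ is near $1$; the real work is to upgrade this to an exact equality. The argument therefore cannot be purely soft: one must exploit an exact structural coincidence --- the common masa $A$, the matching relative commutants secured in the second paragraph, and the reciprocity $\dim_N\Hs\cdot\dim_{N'}\Hs=1$ --- together with the spatial rigidity of close subalgebras used throughout this section, valid once the relevant distances drop below $1/8$. Tracking the constants through the embedding-theorem conjugation, the commutant estimate, and the ensuing trace comparison is what produces the remaining factor $136209$.
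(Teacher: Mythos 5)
Your reduction in the third paragraph is exactly the paper's: conjugate a masa $A\subseteq M$ into $N$ by the embedding theorem, pass to $N_1=u^*Nu$ so that $A\subseteq M\cap N_1$ and $d(M,N_1)\le 301\gamma$ (hence the factor $301$), and note that unitary conjugation preserves the coupling constant. But the entire content of the lemma lies in the step you then leave open: the paper closes the argument by citing \cite[Proposition 4.6]{CCSSWW:Duke}, which says precisely that for close II$_1$ factors sharing such a common subalgebra, $\dim_M\Hs=1$ forces $\dim_{N_1}\Hs=1$ (and is where the constant $1/136209$ comes from). Your substitute for this --- that ``the coupling constant of each can be read off from the relative position of $A$ in $\Hs$'' and that the relevant traces are ``forced to coincide'' --- is not an argument, and its premise is false: the position of a common masa does not determine the coupling constant (e.g.\ an index-$2$ subfactor $N_1\subseteq M$ generated by $A$ and some normalisers still contains $A$ as a masa but has $\dim_{N_1}L^2(M)=2$). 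You correctly diagnose that continuity of the coupling constant only gives ``near $1$,'' not ``equal to $1$,'' but you do not supply the rigidity argument that upgrades it; that is the missing idea.

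A secondary gap is in your second paragraph. The inner-derivation theorem with norm control for derivations of $M$ into $\B(\Hs)$ is not a general fact about von Neumann algebras --- it is essentially Kadison's similarity problem, open for general II$_1$ factors. It does hold for $M$ here because $\dim_M\Hs=1$ gives $M$ a cyclic vector, so $N'\subseteq_{O(\gamma)}M'$ is legitimate; but the ``symmetric'' reverse estimate $M'\subseteq_{O(\gamma)}N'$ would require the same property for $N$, which you do not have (and cannot assume, since standardness of $N$ is what you are trying to prove). The one-sided near inclusion into the finite algebra $M'$ is in fact enough to see that $N'$ is finite, so this part is repairable, but as written the symmetry claim is unjustified.
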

\begin{proof}
Choose $\gamma$ to satisfy $d(M,N)<\gamma<1/40998909$ and choose a masa $A\subseteq M$. Then $A\subseteq_\gamma N$ and $\gamma< 1/100$, so by the embedding theorem (Theorem \ref{Injective}),
there exists a unitary $u\in (A\cup N)''$ with $\|u-I_\Hs\|\leq 150\gamma$ so that $uAu^*\subseteq N$. Let $N_1=u^*Nu$, so that $d(M,N_1)\leq 301\gamma<1/136209$. Since
$A\subseteq M\cap N_1$, we may apply \cite[Proposition 4.6]{CCSSWW:Duke} to $M$ and $N_1$ to conclude that $\dim_{N_1}\Hs =1$. Since $N$ is unitarily conjugate to $N_1$, it
follows that $\dim_N\Hs =1$ as required.
\end{proof}

We now turn to  the tensor product decomposition in a II$_1$ factor $N$ close to a tensor product $M\cong P\vnotimes Q$, using the reduction to standard form technique of \cite[Section 4]{CCSSWW:Duke}. We do this first under the assumption that both factors $M$ and $N$ contain a suitable hyperfinite subfactor; this assumption is removed in the subsequent theorem by means of the embedding theorem.

\begin{lemma}\label{gen}
Let $M$ and $N$ be II$_1$ factors  acting nondegenerately on a  Hilbert space $\Hs$ with $d(M,N)<\gamma$.  Suppose that $M$ is generated by two commuting II$_1$ factors $P$ and $Q$ and  that there are hyperfinite II$_1$ factors $R_1\subseteq P$ and $R_2\subseteq Q$ with $R_1'\cap P=\mathbb CI_P$ and $R_2'\cap Q=\mathbb CI_Q$ which further satisfy $(R_1\cup R_2)''\subseteq N$.  Write $S=R_2'\cap N$ and $T=R_1'\cap N$.  Then the following statements hold:
\begin{enumerate}[(i)]
\item $d(P,S)<\gamma$ and $d(Q,T)<\gamma$;\label{NewLem.Part1}
\item if $\gamma<\frac{1}{2\sqrt{2}+2}$, then $S'\cap N=T$ and $T'\cap N=S$;\label{NewLem.Part2}
\item if $\gamma<10^{-39}$, then $N$ is generated by the commuting II$_1$ factors $S$ and $T$.\label{NewLem.Part3}
\end{enumerate}
\end{lemma}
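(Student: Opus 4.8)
The plan is to exploit the relative-commutant description of $P$ and $Q$ inside $M$. Since $R_1\subseteq P$ and $R_2\subseteq Q$ are irreducible and $M\cong P\vnotimes Q$ (by the result of Nakamura--Takeda quoted at the start of this section), the tensor commutant formula gives $R_2'\cap M=P\vnotimes(R_2'\cap Q)=P$ and $R_1'\cap M=Q$, while $R:=(R_1\cup R_2)''\cong R_1\vnotimes R_2$ is a hyperfinite subfactor of both $M$ and $N$ with $R'\cap M=(R_1'\cap P)\vnotimes(R_2'\cap Q)=\C I$. Part (\ref{NewLem.Part1}) is then immediate from Lemma \ref{NewLem}: taking the common subalgebra $R_2\subseteq M$, $R_2\subseteq N$ gives $d(P,S)=d(R_2'\cap M,R_2'\cap N)\le d(M,N)<\gamma$, and symmetrically $d(Q,T)<\gamma$.

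For (\ref{NewLem.Part2}), first note $R_1\subseteq S$ (as $R_1$ commutes with $R_2$ and $R_1\subseteq N$), so $S'\cap N\subseteq R_1'\cap N=T$; I would obtain the reverse inclusion by a rigidity argument. Applying Lemma \ref{NewLem} to $P\subseteq M$ and $S\subseteq N$ and using $P'\cap M=Q$ yields $d(Q,S'\cap N)\le d(M,N)+2\sqrt2\,d(P,S)<(1+2\sqrt2)\gamma$; combined with $d(Q,T)<\gamma$ this gives $d(T,S'\cap N)<(2+2\sqrt2)\gamma$. The hypothesis $\gamma<\tfrac{1}{2\sqrt2+2}$ is exactly what forces $(2+2\sqrt2)\gamma<1$, so the near-inclusion $T\subseteq_{<1}S'\cap N$ together with $S'\cap N\subseteq T$ forces equality by the Banach-space argument of \cite[Proposition 2.4]{CSSWW:Acta}. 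Symmetrically $T'\cap N=S$.

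For (\ref{NewLem.Part3}), commutativity of $S$ and $T$ is free: part (\ref{NewLem.Part2}) gives $T=S'\cap N\subseteq S'$. To see they are factors, compute $Z(S)=S'\cap S=(S'\cap N)\cap S=T\cap S=(R_1\cup R_2)'\cap N=R'\cap N$; since $R'\cap M=\C I$ and Lemma \ref{NewLem} gives $d(\C I,R'\cap N)=d(R'\cap M,R'\cap N)<\gamma<1/2$, no nontrivial projection of $R'\cap N$ can lie within $1/2$ of a scalar, so $R'\cap N=\C I$. Hence $Z(S)=Z(T)=\C I$, and being finite, infinite-dimensional factors (as $d(P,S),d(Q,T)<1$) they are of type ${\mathrm{II}}_1$.

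The real work is the generation statement $(S\cup T)''=N$, and this is where I expect the main obstacle. Closeness is essential here: for a general irreducible copy of $R_1\vnotimes R_2$ inside an arbitrary factor the algebras $R_2'\cap N$ and $R_1'\cap N$ need not generate, so any proof must use $d(M,N)<\gamma$ and cannot be purely algebraic. Writing $L:=(S\cup T)''$, part (\ref{NewLem.Part2}) shows $L'\cap N=(S'\cap N)\cap(T'\cap N)=T\cap S=\C I$, so $L$ is an irreducible subfactor of $N$ containing $R$; it remains to upgrade irreducibility to equality. My plan is to invoke the standard-form reduction of \cite[Lemma 4.10]{CCSSWW:Duke} with the common amenable subfactor $R$ (legitimate once $d(M,N)<1.74\times10^{-13}$), representing $M$ and $N$ in standard form on a common Hilbert space $\Ks$ with a shared trace vector, so that $\dim_M\Ks=\dim_N\Ks=1$. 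The crux is then to show that $L$ also has coupling constant $1$ on $\Ks$, equivalently that the Jones index $[N:L]=1$: this forces $L=N$, since $L\subseteq N$ gives $\dim_L\Ks\ge\dim_N\Ks$ with equality only when $L=N$. I expect this coupling-constant matching --- transferring the tensor splitting of $M$ across the small distance to $N$ --- to be the genuinely hard step, and the extremely small threshold $10^{-39}$ to arise precisely from composing the embedding-theorem and standard-form estimates (and the coupling-constant transfer of \cite[Proposition 4.6]{CCSSWW:Duke}) needed to carry it out.
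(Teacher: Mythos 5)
Parts (\ref{NewLem.Part1}) and (\ref{NewLem.Part2}) of your argument are correct and coincide with the paper's proof: the identifications $P=R_2'\cap M$, $Q=R_1'\cap M$ via Tomita's commutation theorem, the application of Lemma \ref{NewLem} with the common subalgebras $R_1,R_2$, and the Banach-space rigidity argument forcing $S'\cap N=T$ are all exactly as in the paper. Your observations that $S$ and $T$ commute and are irreducible factors are also fine (the paper gets factoriality for free from Nakamura--Takeda once generation is known, but your direct argument works).

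The problem is part (\ref{NewLem.Part3}), where you have correctly reduced the generation statement to showing that $L:=(S\cup T)''$ has coupling constant $1$ after passing to standard position, but you have not proved this; you explicitly defer it as ``the genuinely hard step,'' and that step is the entire content of the lemma. The paper's mechanism for closing this gap is Lemma \ref{lemTP.4}, which you never invoke: after cutting by a projection in $M'$ and amplifying so that $M_1$ (hence, by Lemma \ref{lemTP.3}, also $N_1$) is in standard position on $\Ks=\Ks_1\otimes\Ks_2$ with $P_1\subseteq\mathcal B(\Ks_1)\otimes I$ and $Q_1\subseteq I\otimes\mathcal B(\Ks_2)$, one uses the commutant near-inclusion $M_1'\subset_{2(1+\sqrt2)\gamma_1}N_1'$ (from \cite[Lemma 4.1(i)]{CCSSWW:Duke}, valid because both algebras are standard) to feed Lemma \ref{lemTP.4}, which produces a small unitary conjugating $S$ into $\mathcal B(\Ks_1)\otimes I_{\Ks_2}$ and $T$ into $I_{\Ks_1}\otimes\mathcal B(\Ks_2)$. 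Only after this spatial splitting can one compute the coupling constant of $L$: each of the conjugated copies of $S$ and $T$ is close to $P_1$, resp.\ $Q_1$, which are standard on $\Ks_1$, resp.\ $\Ks_2$, so Lemma \ref{lemTP.3} applies \emph{separately on each tensor factor} to show $S$ and $T$ are each in standard position there, whence $S\vnotimes T$ is in standard position on $\Ks$ by multiplicativity and must equal $N$. Without the splitting step there is no way to relate $\dim_L\Ks$ to $\dim_S\Ks_1\cdot\dim_T\Ks_2$, and merely knowing $L$ is an irreducible subfactor of $N$ containing $R$ with $d(P,S),d(Q,T)$ small does not bound the index $[N:L]$. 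The constant $10^{-39}$ indeed arises from composing these estimates (note $\gamma_1=18\gamma^{1/2}$, so the threshold is governed by a square root), but identifying where the constant comes from is not a substitute for the splitting argument itself.
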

\begin{proof}
By \cite{NT:PJA} we may view $M$ as $P\,\overline{\otimes}\, Q$, and it follows from Tomita's commutation theorem (see \cite[Theorem IV.5.9 and Corollary IV.5.10]{Tak1}) that $P=R_2'\cap M$. Similarly, $Q=R_1'\cap M$. Part (\ref{NewLem.Part1}) then follows from Lemma 3.3.

For (\ref{NewLem.Part2}), note that $R_1\subseteq S$, so that $S'\cap N\subseteq R_1'\cap N=T$. Applying Lemma \ref{NewLem} to the close pairs $(M,N)$ and $(S,P)$ gives 
\begin{equation}\label{gen.1}
Q=P'\cap M\subseteq_{(2\sqrt{2}+1)\gamma} S'\cap N\subseteq T.
\end{equation}
Since $d(T,Q)<\gamma$, it follows from \eqref{gen.1} that 
\begin{equation}
T\subset_{(2\sqrt{2}+2)\gamma}S'\cap N\subseteq T.
\end{equation}
By hypothesis,  $(2\sqrt{2}+2)\gamma<1$ and this  ensures that $S'\cap N=T$ (see \cite[Proposition 2.4]{CSSWW:Acta}).  The identity $T'\cap N=S$ is obtained similarly.

Now we turn to (\ref{NewLem.Part3}). Since $\gamma <1/87$, \cite[Lemma 4.8]{CCSSWW:Duke} gives an integer $n$, a nonzero projection $e\in M'$, 
and a unitary $u\in (M'\cup N')''$ such that
$e\in (u^*Nu)'$,
\begin{equation}\label{eq3.1}
\|u-I_\Hs\|\leq 12\sqrt{2}(1+\sqrt{2})\gamma+4\sqrt{2}((1+\sqrt{2})\gamma)^{1/2},
\end{equation}
and $\dim_{Me}(e\Hs)=1/n$. Let $\Ks=(e\Hs)\otimes {\mathbb{C}}^n$, and define factors by $M_1=(Me)\otimes I_{{\mathbb{C}}^n}$, $P_1=(Pe)\otimes I_{{\mathbb{C}}^n}$, $Q_1=(Qe)\otimes I_{{\mathbb{C}}^n}$, $R_3=({R}_1e)\otimes I_{{\mathbb{C}}^n}$, $R_4=({R}_2e)\otimes I_{{\mathbb{C}}^n}$, and also let $N_1=((u^*Nu)e)\otimes I_{{\mathbb{C}}^n}$, $S_1=((u^*Su)e)\otimes I_{{\mathbb{C}}^n}$, $T_1=((u^*Tu)e)\otimes I_{{\mathbb{C}}^n}$. Then
$M_1$ and $N_1$ are faithful normal representations of $M$ and $N$ respectively on $\Ks$, and $(R_3\cup R_4)''\subseteq N_1$ since $u$ commutes with $M\cap N$. 

Combining \eqref{eq3.1} and the inequality $\gamma<10^{-19}\gamma^{1/2}$, we have the estimate
\begin{equation}\label{eq3.2}
d(M_1,N_1)\leq \gamma +2\|u-I_\Hs\|\leq (49+24\sqrt{2})\gamma+8\sqrt{2}((1+\sqrt{2})\gamma)^{1/2}<18\gamma^{1/2}.
\end{equation}
Let $\gamma_1$ denote the last term in \eqref{eq3.2}, so that $d(M_1,N_1)\leq \gamma_1$. By construction, $\dim_{M_1} \Ks =1$ so $M_1$ is in standard position on $\Ks$. As the bound on $\gamma$ ensures that $\gamma_1<1/40998909$, Lemma \ref{lemTP.3} shows that $N_1$ is also in standard position on $\Ks$. If we represent $P_1$ and $Q_1$ in standard position on Hilbert spaces $\Ks_1$ and $\Ks_2$ respectively, then $P_1\vnotimes Q_1\cong M_1$ is in standard position on $\Ks_1\otimes \Ks_2$. This allows us to assume that $\Ks=\Ks_1\otimes \Ks_2$ and to identify $P_1$ with $P_1\otimes I_{\Ks_2}$ and $Q_1$ with $I_{\Ks_1}\otimes Q_1$.  As both $M_1$ and $N_1$ are in standard position, \cite[Lemma 4.1(i)]{CCSSWW:Duke} gives
\begin{equation}\label{eq3.8}
M_1'\subset_{2(1+\sqrt{2})\gamma_1} N_1',\qquad N_1'\subset_{2(1+\sqrt{2})\gamma_1}M_1'.
\end{equation}

 The hypotheses of Lemma \ref{lemTP.4} are now met by taking $k=2(1+\sqrt{2})$ and $\lambda=\gamma_1=18 \gamma^{1/2}$. Thus there exists a unitary $u_1\in \mathcal B(\Ks)$ such
that
\begin{equation}
\|I_\Ks-u_1\|<150(90602+54542404(1+\sqrt{2}))\gamma_1,
\end{equation}
and if we define $N_2=u_1^*N_1u_1$, $S_2=u_1^*S_1u_1$, and $T_2=u_1^*T_1u_1$, then $S_2\subseteq \mathcal B(\Ks_1)$, $T_2\subseteq \mathcal B(\Ks_2)$, and
\begin{align}
d(M_1,N_2),\ d(P_1,S_2),\ d(Q_1,T_2) &<(27180601+16362721200(1+\sqrt{2}))\gamma_1\notag\\&<1/40998909,\label{eqTP.8.1}
\end{align}
from the choice of the bound on $\gamma$. By Lemma \ref{lemTP.3}, $S_2$ is in standard position on $\Ks_1$ and similarly $T_2$ is in standard position on $\Ks_2$.  It follows that $(S_2\cup T_2)''$, which is canonically identified with $S_2\vnotimes  T_2$ with respect to $\Ks=\Ks_1\otimes\Ks_2$, is also in standard position on $\Ks$.   Since $(S_2\cup T_2)''\subseteq N_2$ and $\dim_{N_2}\Ks=1$, we conclude that $(S_2\cup T_2)''=N_2$, and hence also that $(S\cup T)''=N$.
\end{proof}

We are now in a position to show that tensorial decompositions can be transferred between close II$_1$ factors.

\begin{theorem}\label{thmTP.8}
Let $M$ and $N$ be II$_1$ factors with separable preduals, acting nondegenerately on a Hilbert space $\Hs$.  If $M$ is
generated by two commuting II$_1$ factors $P$ and $Q$ and $d(M,N)<\gamma<3.3\times 10^{-42}$, then there exist commuting II$_1$ subfactors $S$ and $T$ which generate $N$ and satisfy
\begin{equation}
d(P,S),\ d(Q,T) < (200\sqrt{2}+1)\gamma<284\gamma,\quad d_{cb}(P,S),\ d_{cb}(Q,T)\leq 601\,\gamma.
\end{equation}
\end{theorem}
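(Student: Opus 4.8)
The plan is to reduce the theorem to Lemma \ref{gen} by using the embedding theorem to manufacture, inside $N$, the hyperfinite subfactors that Lemma \ref{gen} requires. By \cite{NT:PJA} we may regard $M$ as $P\vnotimes Q$, and since $M$ (hence $P$ and $Q$) has separable predual, Popa's theorem on the existence of irreducible hyperfinite subfactors in separable ${\mathrm{II}}_1$ factors supplies hyperfinite ${\mathrm{II}}_1$ factors $R_1\subseteq P$ and $R_2\subseteq Q$ with $R_1'\cap P=\C I_P$ and $R_2'\cap Q=\C I_Q$. Their join $R_0:=(R_1\cup R_2)''=R_1\vnotimes R_2$ is a hyperfinite, hence amenable, subfactor of $M$, and this is precisely the data appearing in the hypotheses of Lemma \ref{gen} --- except that we do not yet have $R_0\subseteq N$.

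To arrange this inclusion I would apply the embedding theorem. Since $R_0\subseteq M$ and $d(M,N)<\gamma<1/100$, we have $R_0\subset_\gamma N$, so Theorem \ref{Injective} yields a unitary $u\in(R_0\cup N)''$ with $uR_0u^*\subseteq N$, $\|I_\Hs-u\|\leq 150\gamma$, and the pointwise bound $\|uxu^*-x\|\leq 100\gamma\|x\|$ for $x\in R_0$. Setting $\tilde N:=u^*Nu$ gives $R_0\subseteq\tilde N$ with $d(M,\tilde N)\leq d(M,N)+2\|I_\Hs-u\|<301\gamma$, which the bound on $\gamma$ forces below $10^{-39}$. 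Now Lemma \ref{gen} applies to the pair $(M,\tilde N)$ and produces commuting ${\mathrm{II}}_1$ factors $\tilde S=R_2'\cap\tilde N$ and $\tilde T=R_1'\cap\tilde N$ generating $\tilde N$ (using that $P=R_2'\cap M$ and $Q=R_1'\cap M$ by Tomita's commutation theorem, exactly as in Lemma \ref{gen}). Conjugating back by $u$, the algebras $S:=u\tilde Su^*=(uR_2u^*)'\cap N$ and $T:=u\tilde Tu^*=(uR_1u^*)'\cap N$ are commuting ${\mathrm{II}}_1$ subfactors generating $N$, as required.

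For the sharp norm estimate the key observation is that $S$ is itself a relative commutant, $S=(uR_2u^*)'\cap N$, while $P=R_2'\cap M$. Rather than paying the crude conjugation cost $2\|I_\Hs-u\|$, I would feed these two relative commutants directly into Lemma \ref{NewLem}, obtaining $d(P,S)\leq d(M,N)+2\sqrt2\,d(R_2,uR_2u^*)$. The pointwise bound from the embedding theorem gives $d(R_2,uR_2u^*)\leq 100\gamma$, since every unit-ball element of $R_2\subseteq R_0$ is within $100\gamma$ of its $u$-conjugate (and conversely), so $d(P,S)\leq\gamma+200\sqrt2\,\gamma=(200\sqrt2+1)\gamma<284\gamma$; the estimate for $d(Q,T)$ is identical with $R_1$ in place of $R_2$.

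The completely bounded estimate is where the work lies, and I expect it to be the main obstacle. Here I would run the same argument at the level of amplifications: the cb-version of Lemma \ref{NewLem} amplifies verbatim, and conjugation by an amplified unitary $u\otimes I_n$ moves any algebra by at most $2\|I_\Hs-u\|$ on every amplification, so $d_{\cb}(X,uXu^*)\leq 2\|I_\Hs-u\|$. Chaining the two conjugations (by $u$ when passing to $\tilde N$ and back) together with the identity $d_{\cb}(R_2'\cap M,R_2'\cap\tilde N)\leq d_{\cb}(M,\tilde N)$ --- in which no $2\sqrt2$ factor survives, as the compressing subalgebra $R_2$ is the same on both sides --- yields $d_{\cb}(P,S)\leq d_{\cb}(M,N)+4\|I_\Hs-u\|\leq d_{\cb}(M,N)+600\gamma$. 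The final input is the comparison $d_{\cb}(M,N)\leq d(M,N)<\gamma$ for close ${\mathrm{II}}_1$ factors from \cite{CCSSWW:Duke}, which closes the estimate at $601\gamma$. The delicate point throughout is to verify that each perturbative bound survives amplification with the stated constants, and to confirm that the cb-comparison for $M$ and $N$ is genuinely available under the mere hypothesis $d(M,N)<\gamma$.
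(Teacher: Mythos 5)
Your construction of $S$ and $T$ and your bound $d(P,S),d(Q,T)\leq(200\sqrt2+1)\gamma$ coincide with the paper's proof: choose irreducible hyperfinite subfactors $R_1\subseteq P$, $R_2\subseteq Q$ via Popa's theorem, push $R=(R_1\cup R_2)''$ into $N$ with the embedding theorem, invoke Lemma \ref{gen}(iii) for the perturbed pair, and then exploit $P=R_2'\cap M$, $S=(uR_2u^*)'\cap N$ together with Lemma \ref{NewLem} and the pointwise bound $d(R_2,uR_2u^*)\leq 100\gamma$. That part is correct and is exactly the argument in the paper.

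The completely bounded estimate, however, has a genuine gap, and it is precisely at the point you flagged. Your chain of inequalities terminates in the input ``$d_{\cb}(M,N)\leq d(M,N)<\gamma$ for close II$_1$ factors from \cite{CCSSWW:Duke}'', but no such result exists there or anywhere else: controlling $d_{\cb}(M,N)$ by $d(M,N)$ for a general von Neumann algebra is essentially equivalent to the similarity property for $M$ (this is the content of \cite{CSSW:GAFA,CCSSWW:InPrep}), which is open for general II$_1$ factors --- indeed the paper's own Question~(4) in Section~\ref{S4} asks whether a non-prime II$_1$ factor even has the similarity property, so the authors certainly cannot assume a cb-comparison for $M=P\vnotimes Q$. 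The same objection applies to your intermediate step $d_{\cb}(R_2'\cap M,R_2'\cap\tilde N)\leq d_{\cb}(M,\tilde N)$, whose right-hand side is not known to be small. The paper circumvents this with a different device: for each $n$ it takes a unital copy $F\cong\mathbb M_n$ \emph{inside the hyperfinite factor $R_2$}, observes that $(P\cup F)''=R_0'\cap M$ and $(S_1\cup F)''=R_0'\cap N_1$ for $R_0=F'\cap R_2$, and applies Lemma \ref{NewLem} with the \emph{same} subalgebra $R_0$ on both sides (so the $2\sqrt2$ term vanishes) to get $d((P\cup F)'',(S_1\cup F)'')\leq d(M,N_1)\leq 301\gamma$. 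Since $F$ is a factor, $(P\cup F)''$ and $(S_1\cup F)''$ are spatially isomorphic to $P\otimes\mathbb M_n$ and $S_1\otimes\mathbb M_n$ by the same isomorphism, so this single-Hilbert-space estimate \emph{is} the $n$-th amplified distance, giving $d_{\cb}(P,S_1)\leq 301\gamma$ with no cb-information about $d(M,N)$ required; one final conjugation by $v$ adds $300\gamma$. To repair your argument you would need to replace the unavailable cb-comparison for $(M,N)$ with this absorption of matrix units from $R_2$ (and symmetrically from $R_1$ for the $d_{\cb}(Q,T)$ bound).
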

\begin{proof}
By \cite{P:Invent}, choose  amenable subfactors $R_1\subseteq P$ and $R_2\subseteq Q$ with trivial relative commutants.  Then $(R_1\cup R_2)''$ is also amenable, and we denote this factor by $R$. Since $R\subset_\gamma N$, we may choose a unitary $v\in (R\cup N)''$ with $\|v-I_\Hs\|\leq 150 \gamma$, $\|x-vxv^*\|\leq 100\gamma\|x\|$ for $x\in R$ and $vRv^*\subseteq N$ by the embedding theorem (Theorem \ref{Injective}). Write $N_1=v^*Nv$ so that $R\subseteq M\cap N_1$ and $d(M,N_1)<\gamma_1=301\gamma$.  Since $301\gamma<10^{-39}$, Lemma \ref{gen} (iii) can be applied to conclude that $N_1$ is generated by the commuting subfactors $S_1=R_2'\cap N_1$ and $T_1=R_1'\cap N_1$. Hence $N$ is generated by the commuting subfactors $S:=(vR_2v^*)'\cap N=vS_1v^*$ and $T:=(vR_1v^*)'\cap N=vT_1v^*$.  Since $d(R_1,vR_1v^*),d(R_2,vR_2v^*)\leq 100\gamma$, Lemma \ref{NewLem} shows that $d(P,S)\leq (200\sqrt{2}+1)\gamma$ and similarly $d(Q,T)\leq (200\sqrt{2}+1)\gamma$.

We now estimate the cb-distance between $P$ and $S$, so fix $n\in \mathbb{N}$ and let $F$ denote a unital subalgebra of $R_2$ isomorphic to a copy of the $n\times n$
matrices $\mathbb{M}_n$. By construction, $F\subseteq R_2\subseteq Q\cap T_1$, so there are induced factorizations $Q \cong F\,\otimes\, Q_0$, $T_1\cong F\,\otimes\,
T_0$ and $R_2\cong F\,\otimes\, R_0$ where $Q_0=F'\cap Q$, $T_0=F'\cap T_1$ and $R_0=F'\cap R_2$. Thus $M$ is generated by the two commuting factors
$P_0=(P\cup F)''$ and $Q_0$ (amounting to taking a copy of $\mathbb{M}_n$ from $Q$ and attaching it to $P$) and $N_1$ by the commuting factors $S_0=(S_1\cup F)''$ and $T_0$. We note that $R_0\subseteq Q_0\cap T_0$ and has trivial relative
commutants in $Q_0$ and $T_0$. In this way $(P\cup F)''=R_0'\cap M$ and $(S_1\cup F)''=R_0'\cap N_1$. Another application of Lemma \ref{NewLem} gives
\begin{equation}\label{saw3.23}
d((S_1\cup F)'',(P\cup F)'')= d(R_0'\cap N_1,R_0'\cap M)\leq d(M,N_1)\leq 301\,\gamma.
\end{equation}

Since $F$ is a factor, there is an isometric $*$-isomorphism between
$(F'\cup F)''\subseteq \mathcal{B}(\Hs)$ and $F'\,\otimes\, F\cong
F'\,\otimes\, \mathbb{M}_n\subseteq
\mathcal{B}(\Hs)\,\otimes\,\mathbb{M}_n$, defined on generators by
$f'f\mapsto f'\otimes f$, which carries $(P\cup F)''$ and $(S_1\cup F)''$
respectively to $P\,\otimes\,\mathbb{M}_n$ and
$S_1\,\otimes\,\mathbb{M}_n$. In this way (\ref{saw3.23}) gives $d_{\cb}(P,S_1)\leq 301\gamma$.  As $S=vS_1v^*$ where $v$ is a unitary satisfying $\|v-I_{\Hs}\|\leq 150\,\gamma$, it follows that $d_{\cb}(S,S_1)\leq 300\gamma$, whence the triangle inequality gives $d_{\cb}(P,S)\leq 601\gamma$. The estimate on $d_{cb}(Q,T)$ is proved in the same way.
\end{proof}

The following corollary is a rewording of the last theorem.

\begin{corollary}\label{corTP.9}
Let $M$ and $N$ be II$_1$ factors with separable preduals, acting nondegenerately on a Hilbert space $\Hs$, and suppose that $d(M,N)< 3.3\times 10^{-42}$. If $M$ is
prime, then so too is $N$.
\end{corollary}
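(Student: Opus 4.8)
The plan is to prove the contrapositive and exploit the symmetry of the Kadison--Kastler metric, so that Theorem \ref{thmTP.8} does essentially all of the work. First I would recall the characterisation of primeness implicit in the discussion preceding Lemma \ref{lemTP.2}: by \cite{NT:PJA}, a II$_1$ factor is \emph{non-prime} precisely when it admits a nontrivial tensor factorisation $P\vnotimes Q$ into II$_1$ factors, equivalently when it is generated by two commuting infinite-dimensional von Neumann subalgebras (which are then automatically II$_1$ factors). This reformulation is exactly the hypothesis appearing in Theorem \ref{thmTP.8}.

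Concretely, I would assume that $N$ is \emph{not} prime and deduce the same for $M$. Non-primeness of $N$ gives an isomorphism $N\cong P\vnotimes Q$ with $P,Q$ II$_1$ factors; since $N$ acts on $\Hs$, this factorisation realises $N$ as the von Neumann algebra generated by two commuting II$_1$ subfactors $P,Q\subseteq N\subseteq\mathcal B(\Hs)$. The crucial observation is that the metric is symmetric, so $d(N,M)=d(M,N)<3.3\times10^{-42}$, and the roles of the two factors in Theorem \ref{thmTP.8} may be interchanged.

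Applying Theorem \ref{thmTP.8} with $N$ playing the role of the tensorially-decomposed factor ``$M$'' and with $M$ playing the role of the nearby factor ``$N$'', I obtain commuting II$_1$ subfactors $S,T\subseteq M$ which together generate $M$ (and satisfy $d(P,S),d(Q,T)<284\gamma$, though these estimates are not needed here). Thus $M$ is generated by two commuting II$_1$ factors, so by \cite{NT:PJA} again $M\cong S\vnotimes T$ is not prime. Taking the contrapositive yields: if $M$ is prime then $N$ is prime, which is the assertion, with the constant $3.3\times10^{-42}$ inherited verbatim from Theorem \ref{thmTP.8}.

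I do not expect any genuine obstacle, since the entire analytic content is packaged in Theorem \ref{thmTP.8}; the only points requiring care are the two ``soft'' steps, namely the translation between primeness and the existence of commuting infinite-dimensional generating subalgebras via \cite{NT:PJA}, and the verification that the hypotheses are symmetric under exchanging $M$ and $N$ (which holds because $d(\cdot,\cdot)$ is symmetric and both factors act on the same $\Hs$). If anything is delicate it is merely recording that a tensor decomposition of the abstract factor $N$ supplies commuting subfactors of $N$ as represented on $\Hs$, rather than only an abstract isomorphism, but this is immediate from the definition of $\vnotimes$.
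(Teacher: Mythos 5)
Your proposal is correct and matches the paper's intended argument: the paper offers no separate proof, describing the corollary as ``a rewording'' of Theorem \ref{thmTP.8}, and the intended reasoning is precisely the contrapositive you give, using the symmetry of $d(\cdot,\cdot)$ to apply the theorem with the roles of $M$ and $N$ exchanged, together with the characterisation of non-primeness via commuting generating subfactors from \cite{NT:PJA}. You correctly flag the one point of care (that the theorem as literally stated transfers decompositions from $M$ to $N$, so the swap is genuinely needed), and nothing further is required.
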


In \cite{CCSSWW:Duke}, we encapsulated the weakest form of the Kadison-Kastler conjecture by defining a II$_1$ factor $M$ to be weakly Kadison-Kastler stable if there exists $\vp>0$ so that if $\pi : M\to \mathcal B(\Hs)$ is a normal representation and $N\subseteq \mathcal B(\Hs)$ is a II$_1$ factor satisfying $d(\pi(M),N)<\vp$, then $\pi(M)$ and $N$ are $*$-isomorphic. Theorem \ref{thmTP.8} shows that this property is preserved under taking tensor products.

\begin{corollary}\label{thmTP.11}
Let $P$ and $Q$ be II$_1$ factors with separable preduals and suppose that both are weakly Kadison-Kastler stable. Then so too is $M:= P\vnotimes Q$.
\end{corollary}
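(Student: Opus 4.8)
The plan is to assemble three ingredients: Theorem \ref{thmTP.8}, the hypothesised weak Kadison--Kastler stability of the two tensor factors $P$ and $Q$, and the tensor-splitting criterion of \cite{NT:PJA}. Let $\vp_P,\vp_Q>0$ be stability constants witnessing that $P$ and $Q$ are weakly Kadison--Kastler stable, and set $\vp=\min\{3.3\times 10^{-42},\ \vp_P/284,\ \vp_Q/284\}$, so that $\vp<3.3\times 10^{-42}$ and $(200\sqrt2+1)\vp<\vp_P,\vp_Q$. I claim this $\vp$ witnesses weak stability of $M=P\vnotimes Q$.

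So let $\pi\colon M\to\mathcal B(\Hs)$ be a normal representation and let $N\subseteq\mathcal B(\Hs)$ be a II$_1$ factor with $d(\pi(M),N)<\vp$. First I would record that, since $\pi$ is normal and $M=P\vnotimes Q$ is generated by the canonical commuting copies $P\otimes 1$ and $1\otimes Q$, the image $\pi(M)$ is a II$_1$ factor --- indeed $\pi$ is faithful, being a normal representation of a factor, so $\pi(M)\cong M$ --- and it is generated by the two commuting II$_1$ factors $P_1:=\pi(P\otimes 1)$ and $Q_1:=\pi(1\otimes Q)$. Moreover $\pi$ restricts to faithful normal representations of $P$ and $Q$ with images $P_1$ and $Q_1$. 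Applying Theorem \ref{thmTP.8} to the pair $\pi(M),N$ (with tensor factors $P_1,Q_1$) then produces commuting II$_1$ subfactors $S,T$ generating $N$ with $d(P_1,S),\,d(Q_1,T)<(200\sqrt2+1)\vp$.

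Next I would invoke the weak stability of the factors themselves: viewing $S$ as a II$_1$ factor acting on $\Hs$ at distance less than $\vp_P$ from the faithful normal image $P_1$ of $P$, the defining property of $\vp_P$ yields $P\cong P_1\cong S$, and symmetrically $Q\cong Q_1\cong T$. Finally, as $N$ is a II$_1$ factor generated by the two commuting, infinite-dimensional subfactors $S$ and $T$, the criterion of \cite{NT:PJA} gives $N\cong S\vnotimes T$. Chaining these isomorphisms produces $\pi(M)\cong M=P\vnotimes Q\cong S\vnotimes T\cong N$, which is exactly the $*$-isomorphism demanded by the definition.

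I do not anticipate a genuine obstacle: all of the analytic content is carried by Theorem \ref{thmTP.8}, and what remains is bookkeeping. The points requiring care are the verification that the subfactors $S$ and $T$ produced by Theorem \ref{thmTP.8} are close to \emph{faithful normal images} of $P$ and $Q$ (so that the weak stability hypotheses for $P$ and $Q$ genuinely apply to them), and the tracking of constants so that $(200\sqrt2+1)\vp$ stays below both $\vp_P$ and $\vp_Q$ while $\vp$ remains below the threshold $3.3\times 10^{-42}$ required by Theorem \ref{thmTP.8}.
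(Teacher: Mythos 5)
Your argument is correct and is essentially the paper's own proof: both apply Theorem \ref{thmTP.8} to produce commuting subfactors $S$ and $T$ generating $N$ that are close to the images of $P$ and $Q$, invoke weak Kadison--Kastler stability of each tensor factor to get $P\cong S$ and $Q\cong T$, and conclude via $M\cong P\vnotimes Q\cong S\vnotimes T\cong N$. The only difference is that you track the constant $(200\sqrt{2}+1)\vp$ explicitly and verify that $S$, $T$ are close to faithful normal images of $P$, $Q$, points the paper leaves implicit.
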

\begin{proof}
Let $\eps>0$ be small enough to satisfy the definition of weak Kadison-Kastler stability of both $P$ and $Q$.
Suppose that $M$ and $N$ are represented on some Hilbert space.  When $d(M,N)$ is sufficiently small, Theorem \ref{thmTP.8} shows that $N$ is generated by two commuting II$_1$ factors $S$ and $T$ such that $d(P,S)<\eps$ and $d(Q,T)<\eps$.  Thus $P\cong S$ and $Q\cong T$, from which it follows that $M\cong P\vnotimes Q\cong S\vnotimes T\cong N$. Hence $M$ is weakly Kadison-Kastler stable.
\end{proof}

We now turn to the strongest form of the Kadison-Kastler conjecture, which asks that close von Neumann algebras arise from small unitary perturbations.  As in \cite{CCSSWW:Duke}, we say that a II$_1$ factor $M$ is \emph{strongly Kadison-Kastler stable} if, given $\vp>0$, there exists $\delta>0$ with the following property: if $\pi : M\to \B(\Hs)$ is a normal representation and $N\subseteq \B(\Hs)$ is a II$_1$ factor satisfying $d(\pi(M),N)<\delta$, then there exists a unitary $u\in \B(\Hs)$ with $\|I_{\Hs} - u\|<
\vp$ such that $u\pi(M)u^*=N$.  We need a standard observation regarding representations of tensor products.

\begin{lemma}\label{lemTP.7}
Let $M$ and $N$ be II$_1$ factors and let $\pi : M\vnotimes N\to \mathcal B(\Hs)$ be a normal
representation on a Hilbert space $\Hs$. Then there exists a type I$_\infty$ factor $P$ such that
\begin{equation}\label{eqt.6.1}
\pi(M\otimes 1_N)\subseteq P\subseteq \pi(1_M\otimes N)'.
\end{equation}
\end{lemma}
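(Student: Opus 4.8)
The plan is to use the Murray--von Neumann coupling constant, which is a complete invariant for normal faithful representations of a factor up to unitary equivalence and which behaves multiplicatively under tensor products. Since $M\vnotimes N$ is a II$_1$ factor and $\pi$ is a nonzero normal representation, $\pi$ is automatically faithful, so it is a normal faithful representation of the factor $\mathcal M:=M\vnotimes N$ on $\Hs$. I would set $c:=\dim_{\pi(\mathcal M)}\Hs\in(0,\infty]$ for its coupling constant.

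First I would realise a concrete tensor-product representation with the same coupling constant. Choosing a factorisation $c=c_1c_2$ with $c_1,c_2\in(0,\infty]$ (for instance $c_1=c$ and $c_2=1$), I would take normal faithful representations $\rho_M\colon M\to\B(\Ks_1)$ and $\rho_N\colon N\to\B(\Ks_2)$ with $\dim_{\rho_M(M)}\Ks_1=c_1$ and $\dim_{\rho_N(N)}\Ks_2=c_2$; such representations exist for any prescribed coupling constant. Their tensor product $\rho:=\rho_M\otimes\rho_N\colon\mathcal M\to\B(\Ks_1\otimes\Ks_2)$ is then normal and faithful, and by multiplicativity of the coupling constant under tensor products it has coupling constant $c_1c_2=c$.

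Since $\pi$ and $\rho$ are normal faithful representations of the factor $\mathcal M$ with the same coupling constant, they are unitarily equivalent, so there is a unitary $V\colon\Hs\to\Ks_1\otimes\Ks_2$ with $V\pi(x)V^*=\rho(x)$ for all $x\in\mathcal M$. Evaluating this intertwining relation on the distinguished subalgebras $M\otimes 1_N$ and $1_M\otimes N$ gives $V\pi(M\otimes 1_N)V^*=\rho_M(M)\otimes 1\subseteq\B(\Ks_1)\otimes 1$ and $V\pi(1_M\otimes N)V^*=1\otimes\rho_N(N)$. I would then set $P:=V^*(\B(\Ks_1)\otimes 1)V$. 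This is a type I$_\infty$ factor, since $\Ks_1$ is infinite dimensional ($M$ being an infinite dimensional factor); it contains $\pi(M\otimes 1_N)$; and since $\B(\Ks_1)\otimes 1\subseteq(1\otimes\rho_N(N))'$ we obtain $P\subseteq\pi(1_M\otimes N)'$, which is exactly \eqref{eqt.6.1}.

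The point I would be most careful about is using the coupling constant to realise $\pi$ \emph{itself}, rather than an amplification of $\pi$, as a genuine tensor-product representation on $\Ks_1\otimes\Ks_2$. A softer argument using only quasi-equivalence of $\pi$ with the standard representation would produce the required type I factor only after amplifying $\Hs$ by an infinite dimensional space, and there is no evident way to descend such a factor back to $\Hs$. Matching the coupling constants exactly eliminates this amplification and allows the obvious type I factor $\B(\Ks_1)\otimes 1$ to be transported directly to $\Hs$ by the single unitary $V$. The only inputs are the standard properties of the coupling constant---its role as a complete unitary invariant for normal faithful representations of a factor, and its multiplicativity under tensor products---neither of which requires a separability hypothesis.
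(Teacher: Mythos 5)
Your strategy---realise $\pi$ as an honest tensor product representation $\rho_M\otimes\rho_N$ on $\Ks_1\otimes\Ks_2$ by matching invariants, and then pull $\mathcal B(\Ks_1)\otimes 1$ back to $\Hs$ with the intertwining unitary $V$---is a legitimate alternative to the paper's argument, and everything after $V$ is obtained is correct. The gap is in producing $V$. The scalar coupling constant $\dim_{\pi(M\vnotimes N)}\Hs\in(0,\infty]$ is a complete unitary invariant for normal representations of a II$_1$ factor only when it is finite or when the commutant $\pi(M\vnotimes N)'$ is countably decomposable (e.g.\ when $\Hs$ is separable). When the coupling constant is $\infty$ and $\Hs$ is nonseparable it fails to separate representations: $x\mapsto\lambda(x)\otimes 1$ on $L^2(M\vnotimes N)\otimes\ell^2(\mathbb N)$ and on $L^2(M\vnotimes N)\otimes\ell^2(\mathbb R)$ both have coupling constant $\infty$ but act on Hilbert spaces of different dimension, so no unitary $V$ exists between them. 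Since the lemma (and its application in Lemma \ref{newtplem}) imposes no separability hypothesis on $\Hs$, your closing assertion that the invariant ``requires no separability hypothesis'' is precisely where the argument breaks: with $c=\infty$ your construction may land on a tensor product representation that is not unitarily equivalent to $\pi$.

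The repair is to replace the scalar invariant by the Murray--von Neumann equivalence class of the projection $p\in\lambda(M\vnotimes N)'\vnotimes\mathcal B(\Ks)$ which cuts an amplification of the standard representation down to $\pi$, and to show that this class contains a projection of product form $p_1\otimes q_1\otimes f$; conjugating by a partial isometry implementing that equivalence then exhibits $\pi$ as a product representation. This is exactly the paper's proof, so your argument is best viewed as a clean repackaging of the same idea that is valid in the separable (or finite coupling constant) case, but which needs the projection-level version of the classification to cover the stated generality.
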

\begin{proof}
Let $\lambda$ denote the standard representation of $M\vnotimes N$ on $L^2(M)\otimes L^2(N)$. From the general form of normal representations of von Neumann algebras, there exists a Hilbert space $\Ks$ and a projection $p\in \lambda(M\vnotimes N)'\vnotimes\mathcal B(\Ks)=J_MMJ_M\vnotimes J_NNJ_N\vnotimes\mathcal B(\Ks)$ so that $\pi$ is unitarily equivalent to the representation $p(\lambda(x)\otimes 1_\Ks)$, for $x\in M\vnotimes N$. Since $\lambda(M\vnotimes N)'\vnotimes\mathbb B(\Ks)$ is a type II factor, the projection $p$ is Murray-von Neumann equivalent to $p_1\otimes q_1\otimes f$, where the projections $p_1,q_1$ and $f$ lie in $J_MMJ_M$, $J_NNJ_N$ and $\mathcal B(\Ks)$ respectively.  A partial isometry implementing this equivalence provides a unitary conjugacy between the representations $p\lambda(\cdot)$ and $(p_1\otimes q_1\otimes f)\lambda(\cdot)$, so $\pi$ is unitarily equivalent to $\pi_1=(p_1\otimes q_1\otimes f)\lambda(\cdot)$.  For this latter representation we can verify the statement of the lemma with the I$_\infty$ factor $P_1=p_1(\mathcal B(L^2(M))p_1\vnotimes q_1\vnotimes f\mathcal B(\Ks)f$, and hence an appropriate unitary conjugation provides the required $P$ for the representation $\pi$.
\end{proof}

By the results of \cite{CCSSWW:InPrep}, strong Kadison-Kastler stability for a II$_1$ factor $M$ implies that $M$ has a positive solution to Kadison's similarity problem.  To our knowledge, it is not known whether a tensor product of two II$_1$ factors with the similarity property necessarily also has the similarity property, so to obtain preservation results for strongly Kadison-Kastler stable factors we need to impose an additional hypothesis to take care of the similarity property. In the next lemma, this is the condition that $\pi(M)'\subset_\delta N'$.
\begin{lemma}\label{newtplem}
Let $P$ and $Q$ be strongly Kadison-Kastler stable II$_1$ factors with separable preduals and let $M=P\vnotimes Q$. Then, for all $\eps>0$, there exists $\delta>0$ with the following property: if $\pi:M\rightarrow\mathcal B(\Hs)$ is a normal representation and $N\subseteq\mathcal B(\Hs)$ is a von Neumann algebra with $d(\pi(M),N)<\delta$ and $\pi(M)'\subset_\delta N'$, then there exists a unitary $u\in\mathcal B(\Hs)$ with $\|u-I_\Hs\|<\eps$ such that $u\pi(M)u^*=N$.
\end{lemma}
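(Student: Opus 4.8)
The plan is to place $\pi(M)$ inside an honest spatial tensor product, transfer the tensorial splitting from $\pi(M)$ to $N$, rotate the two pieces of $N$ onto the matching tensor legs, perturb each leg separately using the strong Kadison--Kastler stability of $P$ and of $Q$, and finally assemble the two leg-wise perturbations into a single product unitary. To begin, I would apply Lemma~\ref{lemTP.7} to the normal representation $\pi$ of $M=P\vnotimes Q$, obtaining a type I$_\infty$ factor $\mathcal I$ with $\pi(P\otimes 1_Q)\subseteq\mathcal I\subseteq\pi(1_P\otimes Q)'$. Writing $\mathcal I=\mathcal B(\Hs_1)\otimes I_{\Hs_2}$ for the spatial tensor decomposition $\Hs=\Hs_1\otimes\Hs_2$ determined by $\mathcal I$, this forces $\pi(P\otimes 1_Q)\subseteq\mathcal B(\Hs_1)\otimes I_{\Hs_2}$ and $\pi(1_P\otimes Q)\subseteq I_{\Hs_1}\otimes\mathcal B(\Hs_2)$. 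Setting $P_1:=\pi(P\otimes 1_Q)$ and $Q_1:=\pi(1_P\otimes Q)$, these are the images of normal representations of $P$ on $\Hs_1$ and of $Q$ on $\Hs_2$, and $\pi(M)=P_1\vnotimes Q_1$ relative to $\Hs=\Hs_1\otimes\Hs_2$. Conjugating by the spatial unitary implementing $\mathcal I\cong\mathcal B(\Hs_1)\otimes I_{\Hs_2}$ preserves $d(\pi(M),N)$ and the near containment $\pi(M)'\subset_\delta N'$, and it does not alter the norm distance from $I_\Hs$ of the eventual perturbing unitary; so I may assume we are already in this position. For $\delta$ small, $N$ is itself a II$_1$ factor, since close von Neumann algebras share their type decomposition \cite{KK:AJM}.

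Next I would invoke Theorem~\ref{thmTP.8} (legitimate once $\delta<3.3\times 10^{-42}$), obtaining commuting II$_1$ subfactors $S$ and $T$ generating $N$ with $d(P_1,S),\,d(Q_1,T)<284\delta$. I then apply Lemma~\ref{lemTP.4} with $\lambda=284\delta$ and $k=1/284$, so that the hypothesis $\pi(M)'\subset_\delta N'$ provides exactly the required reverse near containment $\pi(M)'\subset_{k\lambda}N'$; the constraint \eqref{NewEq.1} holds for all sufficiently small $\delta$. This yields a unitary $u_0\in\mathcal B(\Hs)$ with $\|I_\Hs-u_0\|=O(\delta)$ for which $S':=u_0^*Su_0\subseteq\mathcal B(\Hs_1)\otimes I_{\Hs_2}$ and $T':=u_0^*Tu_0\subseteq I_{\Hs_1}\otimes\mathcal B(\Hs_2)$, with $u_0^*Nu_0=(S'\cup T')''$ and $d(P_1,S'),\,d(Q_1,T')=O(\delta)$.

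With $S'$ now on the first leg and $T'$ on the second, I would run the two stability hypotheses independently. Viewing $P_1,S'\subseteq\mathcal B(\Hs_1)$, strong stability of $P$ produces, for a prescribed $\eps'>0$, a constant $\delta_P$ and, once $d(P_1,S')<\delta_P$, a unitary $u_P\in\mathcal B(\Hs_1)$ with $\|I_{\Hs_1}-u_P\|<\eps'$ and $u_PP_1u_P^*=S'$; symmetrically $Q$ gives $u_Q\in\mathcal B(\Hs_2)$ with $\|I_{\Hs_2}-u_Q\|<\eps'$ and $u_QQ_1u_Q^*=T'$. The key point, which I expect to be the main obstacle, is precisely that these two a priori unrelated perturbations act on complementary tensor legs, so the single product unitary $u:=u_P\otimes u_Q$ satisfies $u(P_1\otimes I_{\Hs_2})u^*=S'$ and $u(I_{\Hs_1}\otimes Q_1)u^*=T'$ at once, and hence
\begin{equation}
u\,\pi(M)\,u^*=(S'\cup T')''=u_0^*Nu_0.
\end{equation}
It is the combination of Lemma~\ref{lemTP.7} (forcing $\pi(M)$ into tensor position) with the commutant hypothesis processed through Lemma~\ref{lemTP.4} (forcing $S,T$ onto the matching legs) that makes this recombination possible; without placing the perturbations on orthogonal legs one is left with two unrelated unitaries that cannot be merged into one. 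Setting $w:=u_0(u_P\otimes u_Q)$ then gives $w\pi(M)w^*=N$ and
\begin{equation}
\|I_\Hs-w\|\leq\|I_\Hs-u_0\|+\|I_{\Hs_1}-u_P\|+\|I_{\Hs_2}-u_Q\|<\|I_\Hs-u_0\|+2\eps'.
\end{equation}
Given $\eps$, I would take $\eps'=\eps/4$, let $\delta_P,\delta_Q$ be the resulting stability constants, and then choose $\delta$ small enough to force simultaneously the hypotheses of Theorem~\ref{thmTP.8} and Lemma~\ref{lemTP.4}, the bounds $d(P_1,S')<\delta_P$ and $d(Q_1,T')<\delta_Q$, and $\|I_\Hs-u_0\|<\eps/2$, so that $\|I_\Hs-w\|<\eps$. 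Undoing the harmless spatial conjugation of the first step returns, in the original picture, a perturbing unitary with the same norm distance from $I_\Hs$.
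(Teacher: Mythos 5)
Your proposal is correct and follows essentially the same route as the paper: Theorem~\ref{thmTP.8} to split $N$ into commuting subfactors close to $P_1$ and $Q_1$, Lemma~\ref{lemTP.7} (with \cite[Theorem 9.3.2]{KRin:Book2}) to put $\pi(M)$ in spatial tensor position, Lemma~\ref{lemTP.4} driven by the commutant hypothesis to rotate $S$ and $T$ onto the matching legs, and then leg-wise strong stability assembled into a product unitary. The only (immaterial) deviations are the order of the first two steps and your choice $k=1/284$ where the paper takes $k=1$ with $\lambda=284\delta$.
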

\begin{proof}
Fix $\vp<1/50$. If we apply the strong stability hypothesis to $P$ and $Q$ with $\vp$ replaced by $\vp/4$, then there exists $\delta_0>0$ with the following property: if
$\sigma:P\to\mathcal B(\Hs)$ is a normal representation and $S\subseteq \mathcal B(\Hs)$ satisfies $d(\sigma(P),S)<\delta_0$, then $\sigma(P)$ and $S$ are unitarily conjugate by a unitary
$u\in \mathcal B(\Hs)$ satisfying $\|I_\Hs-u\|<\vp/4$, with a similar statement for $Q$. Now choose $\delta >0$ so small that the following three inequalities are satisfied:
\begin{equation}\label{eqt.8.1}
\delta<3.3\times 10^{-42},
\end{equation}
\begin{equation}\label{eqt.8.2}
 150(90602+27271202)\times 284\,\delta<\vp/2<1/100,
\end{equation}
\begin{equation}\label{eqt.8.3}
(27180601+8181360600)\times 284\,\delta<\delta_0.
\end{equation}

Let $\pi:M\rightarrow \mathcal B(\Hs)$ be a normal representation and let $N\subseteq \mathcal B(\Hs)$ be such that $d(\pi(M),N)<\delta$. Let us write $M_1=\pi(M)$, $P_1=\pi(P\otimes I_Q)$ and $Q_1=\pi(I_P\otimes Q)$. Since $\delta <3.3\times 10^{-42}$, Theorem \ref{thmTP.8} shows that $N$ is generated by two commuting subfactors $S$ and $T$ satisfying
\begin{equation}
d(P_1,S),\ d(Q_1,T)<\delta_1:= 284\,\delta.
\end{equation}
By Lemma \ref{lemTP.7}, there is a type I$_\infty$ factor lying between $P_1$ and $Q_1'$. By \cite[Theorem 9.3.2]{KRin:Book2} there is, up to unitary equivalence, a decomposition
of $\Hs$ as $\Hs_1\otimes \Hs_2$ such that this type I$_\infty$ factor is $\mathcal B(\Hs_1)\otimes I_{\Hs_2}$, whereupon $P_1\subseteq \mathcal B(\Hs_1)\otimes I_{\Hs_2}$ and $Q_1\subseteq I_{\Hs_1}\otimes \mathcal B(\Hs_2)$. From \eqref{eqt.8.2}, the inequalities in the hypotheses of Lemma \ref{lemTP.4}
are satisfied for $\lambda=\delta_1$ and $k=1$, so by that lemma there exists a unitary $u_1\in \mathcal B(\Hs)$ with the following properties. The inequality
\begin{equation}\label{eqt.8.4a}
\|I_\Hs-u_1\|<150(90602+27271202)\delta_1
\end{equation}
holds and, upon setting $N_1=u_1^*Nu_1$, $S_1=u_1^*Su_1$, $T_1=u_1^*Tu_1$, we have $S_1\subseteq \mathcal B(\Hs_1)\otimes I_{\Hs_2}$ and $T_1\subseteq I_{\Hs_1}\otimes \mathcal B(\Hs_2)$. Moreover,  the estimates
\begin{equation}
d(M_1,N_1),\ d(P_1,S_1),\ d(Q_1,T_1)\leq (27180601+8181360600)\delta_1<\delta_0
\end{equation}
are valid, where the last inequality is \eqref{eqt.8.3}.  Thus there exist unitaries $v\in \mathcal B(\Hs_1)$ and $w\in \mathcal B(\Hs_2)$ such that $(v\otimes I_{\Hs_2})P_1(v\otimes I_{\Hs_2})^*=S_1$, $(I_{\Hs_1}\otimes w)Q_1(I_{\Hs_1}\otimes w)^*=T_1$
and the inequalities $\|I_{\Hs_1}-v\|,\ \|I_{\Hs_2}-w\|<\vp/4$ hold. Let $u_2=v\otimes w$ and observe that $\|I_\Hs-u_2\|<\vp/2$. If we define $u=u_2u_1$, then $u\pi(M)u^*=N$ and
\begin{align}
\|I_\Hs-u\|&\leq \|I_\Hs-u_1\|+\|I_\Hs-u_2\|\notag\\
&<150(90602+27271202)\delta_1+\vp/2<\vp
\end{align}
from \eqref{eqt.8.4a} and \eqref{eqt.8.2}.
\end{proof}

The most general class of II$_1$ factors known to have the similarity property (\cite{C:JOT,Pi:IJM,C:JFA}) are those with Murray and von Neumann's property $\Gamma$.  By definition, property $\Gamma$ passes to tensor products, yielding the following result.
\begin{theorem}\label{thmTP.12}
Let $P$ and $Q$ be II$_1$ factors with separable preduals and suppose that both are strongly Kadison-Kastler stable. Suppose further that at least one has property $\Gamma$. Then $M:= P\vnotimes Q$ is strongly Kadison-Kastler stable.
\end{theorem}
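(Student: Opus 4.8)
The plan is to reduce the theorem to Lemma \ref{newtplem}, whose only extra hypothesis over the bare definition of strong Kadison--Kastler stability is the reverse commutant near containment $\pi(M)'\subset_\delta N'$; the whole role of the property $\Gamma$ assumption will be to supply this hypothesis for free. First I would record that $M$ has the similarity property: since at least one of $P$ and $Q$ has property $\Gamma$ and property $\Gamma$ passes to tensor products, the factor $M=P\vnotimes Q$ has property $\Gamma$, and hence has a positive solution to Kadison's similarity problem by \cite{C:JOT,Pi:IJM,C:JFA}. This is exactly the ingredient Lemma \ref{newtplem} had to assume separately, and it explains why the hypothesis appears: it is not known whether $P\vnotimes Q$ inherits the similarity property from $P$ and $Q$ alone, so property $\Gamma$ is imposed precisely to force it.

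Next, fix $\vp>0$ and let $\delta_1>0$ be the constant produced by Lemma \ref{newtplem} for this $\vp$. The heart of the argument is to choose $\delta\in(0,\delta_1]$ so small that for any normal representation $\pi:M\to\B(\Hs)$ and any II$_1$ factor $N\subseteq\B(\Hs)$ with $d(\pi(M),N)<\delta$ one automatically has $\pi(M)'\subset_{\delta_1}N'$. Here I would invoke the similarity property of $M$ via the commutant perturbation results of \cite{CCSSWW:InPrep}, which develop the connection between close algebras and Kadison's similarity problem begun in \cite{CSSW:GAFA}. The guiding model is the amenable case: there injectivity of $\pi(M)$ passes to the close factor $N$, and applying Theorem \ref{Injective} to the near containment $N\subset_\delta\pi(M)$ gives a unitary $u\approx I_\Hs$ with $uNu^*\subseteq\pi(M)$, whence $\pi(M)'\subseteq uN'u^*\subset_{g(\delta)}N'$. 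In the present setting the similarity property of $M$ (which passes to the close factor $N$) provides a bounded implementing operator that plays the role of that unitary averaging, allowing one to correct $N\subset_\delta\pi(M)$ and thereby produce, for each element of $\pi(M)'$, a nearby element of $N'$. This yields $\pi(M)'\subset_{g(\delta)}N'$ for a function $g$ with $g(\delta)\to 0$ as $\delta\to 0$, and shrinking $\delta$ then forces $g(\delta)\le\delta_1$.

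With such a $\delta$ in hand the theorem follows at once: any II$_1$ factor $N$ with $d(\pi(M),N)<\delta$ satisfies both $d(\pi(M),N)<\delta\le\delta_1$ and $\pi(M)'\subset_{\delta_1}N'$, so Lemma \ref{newtplem} supplies a unitary $u\in\B(\Hs)$ with $\|I_\Hs-u\|<\vp$ and $u\pi(M)u^*=N$, which is exactly strong Kadison--Kastler stability of $M$. I expect the main obstacle to be the middle step, the verification of $\pi(M)'\subset_{\delta_1}N'$: everything hinges on extracting, from the similarity property of $M$, quantitative control over how the commutant moves under a small perturbation of $M$, and it is only this control that lets one discharge the commutant hypothesis built into Lemma \ref{newtplem}.
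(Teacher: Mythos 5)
Your proposal is correct and follows essentially the same route as the paper: reduce to Lemma \ref{newtplem} by using property $\Gamma$ (which passes to tensor products and, by the results cited, yields the similarity property) to supply the missing commutant near-containment $\pi(M)'\subset_{\delta}N'$. The only cosmetic difference is that the paper transfers property $\Gamma$ itself to the close factor $N$ via Proposition \ref{GammaProp} and then quotes the quantitative estimate \cite[Proposition 2.4(ii)]{CCSSWW:Duke} to get $\pi(M)'\subset_{5\gamma}N'$, whereas you propose transferring the similarity property to $N$ directly and running the derivation argument; both discharge the extra hypothesis of Lemma \ref{newtplem} in the same way.
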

\begin{proof}
Let $\pi:M\rightarrow \mathcal B(\Hs)$ be a faithful normal representation.  Let $N\subseteq \mathcal B(\Hs)$ be another II$_1$ factor with $d(\pi(M),N)<1/190$ so that $N$ inherits property $\Gamma$ from $M=P\vnotimes Q$ from Proposition \ref{GammaProp}.   Proposition 2.4(ii) of \cite{CCSSWW:Duke} shows that if $d(\pi(M),N)<\gamma$, then the near inclusion  $\pi(M)'\subset_{5\gamma}N'$ holds. Strong Kadison-Kastler stability now follows from Lemma \ref{newtplem}.
\end{proof}

\begin{remark}\label{remTP.13}
 The examples of strongly Kadison-Kastler stable II$_1$ factors constructed in \cite{CCSSWW:Duke}  all have the form $(P\rtimes_\alpha G)\vnotimes R$ where $P$ is amenable and $G$ is $SL_n(\mathbb{Z})$ for $n\geq 3$, and these have property $\Gamma$ since they are McDuff. These satisfy the hypothesis of Theorem \ref{thmTP.12} and thus new examples of strongly Kadison-Kastler stable factors can be generated by taking finite tensor products of the existing ones
 of \cite{CCSSWW:Duke}.$\hfill\square$
\end{remark}

\section{Open questions}\label{S4}

We end with a short list of open problems.

\begin{enumerate}
\item Does property (T) transfer to sufficiently close subalgebras?
\item What can be said about the fundamental group, or outer automorphism group of close II$_1$ factors?
\item How do nonamenable subalgebras of close II$_1$ factors, such as subfactors behave?  If $M$ and $N$ are sufficiently close II$_1$ factors and $M$ has an index $2$ subfactor, must $N$ also have an index $2$ subfactor?
\item Does a non-prime II$_1$ factor have the similarity property?  Less generally, does the tensor product of two II$_1$ factors with the similarity property have the similarity property?
\end{enumerate}

\subsection*{Acknowledgements} SW thanks Ionut Chifan for useful conversations about von Neumann algebras close to tensor products which sparked the line of research developed in section \ref{S3}.


\begin{thebibliography}{10}

\bibitem{CCSSWW:PNAS}
J.~Cameron, E.~Christensen, A.~M. Sinclair, R.~R. Smith, S.~A. White, and A.~D.
  Wiggins.
\newblock Type {${\rm II}_1$} factors satisfying the spatial isomorphism
  conjecture.
\newblock {\em Proc. Natl. Acad. Sci. USA}, 109(50):20338--20343, 2012.

\bibitem{CCSSWW:InPrep}
J.~Cameron, E.~Christensen, A.~M. Sinclair, R.~R. Smith, S.~White, and A.~D.
  Wiggins.
\newblock A remark on the similarity and perturbation problems.
\newblock {\em C. R. Acad. Sci. Canada}, 35(2):70--76, 2013.

\bibitem{CCSSWW:Duke}
J.~Cameron, E.~Christensen, A.~M. Sinclair, R.~R. Smith, S.~White, and A.~D.
  Wiggins.
\newblock {K}adison-{K}astler stable factors.
\newblock {\em Duke J. Math.}, 163(14):2639--2686, 2014.



\bibitem{CFRW:JLMS}
J.~Cameron, J.~Fang, M.~Ravichandran, and S.~White.
\newblock The radial masa in a free group factor is maximal injective.
\newblock {\em J. Lond. Math. Soc. (2)}, 82(3):787--809, 2010.

\bibitem{CS:ASENS}
I.~Chifan and T.~Sinclair.
\newblock On the structural theory of {${\rm II}_1$} factors of negatively
  curved groups.
\newblock {\em Ann. Sci. \'Ec. Norm. Sup\'er. (4)}, 46(1):1--33, 2013.

\bibitem{C:JLMS}
E.~Christensen.
\newblock Perturbations of type {I} von {N}eumann algebras.
\newblock {\em J. London Math. Soc. (2)}, 9:395--405, 1974/75.

\bibitem{C:Invent}
E.~Christensen.
\newblock Perturbations of operator algebras.
\newblock {\em Invent. Math.}, 43(1):1--13, 1977.

\bibitem{C:IJM}
E.~Christensen.
\newblock Perturbations of operator algebras {II}.
\newblock {\em Indiana Univ. Math. J.}, 26:891--904, 1977.

\bibitem{C:Acta}
E.~Christensen.
\newblock Near inclusions of {$C^*$}-algebras.
\newblock {\em Acta Math.}, 144(3-4):249--265, 1980.

\bibitem{C:JOT}
E.~Christensen.
\newblock Similarities of {${\rm II}_1$} factors with property {$\Gamma$}.
\newblock {\em J. Operator Theory}, 15(2):281--288, 1986.

\bibitem{C:JFA}
E.~Christensen.
\newblock Finite von {N}eumann algebra factors with property {$\Gamma$}.
\newblock {\em J. Funct. Anal.}, 186(2):366--380, 2001.

\bibitem{CSSW:GAFA}
E.~Christensen, A.~M. Sinclair, R.~R. Smith, and S.~White.
\newblock Perturbations of {$C^\ast$}-algebraic invariants.
\newblock {\em Geom. Funct. Anal.}, 20(2):368--397, 2010.

\bibitem{CSSWW:Acta}
E.~Christensen, A.~M. Sinclair, R.~R. Smith, S.~White, and W.~Winter.
\newblock Perturbations of nuclear {C$^*$}-algebras.
\newblock {\em Acta. Math.}, 208(1):93--150, 2012.

\bibitem{CJ:BAMS}
A.~Connes and V.~F.~R. Jones.
\newblock A {${\rm II}_{1}$} factor with two nonconjugate {C}artan subalgebras.
\newblock {\em Bull. Amer. Math. Soc. (N.S.)}, 6(2):211--212, 1982.

\bibitem{D:Ann}
J.~Dixmier.
\newblock Sous-anneaux ab\'eliens maximaux dans les facteurs de type fini.
\newblock {\em Ann. of Math. (2)}, 59:279--286, 1954.

\bibitem{D:MA}
K.~J. Dykema.
\newblock Two applications of free entropy.
\newblock {\em Math. Ann.}, 308(3):547--558, 1997.

\bibitem{FHS:BLMS}
I.~Farah, B.~Hart, and D.~Sherman.
\newblock Model theory of operator algebras {III}: elementary equivalence and
  {$\rm II_1$} factors.
\newblock {\em Bull. Lond. Math. Soc.}, 46(3):609--628, 2014.

\bibitem{Joh:CMB}
B.~E. Johnson.
\newblock A counterexample in the perturbation theory of {$C^*$}-algebras.
\newblock {\em Canad. Math. Bull.}, 25(3):311--316, 1982.

\bibitem{K:AJM}
R.~V. Kadison.
\newblock On the othogonalization of operator representations.
\newblock {\em Amer. J. Math.}, 77:600--620, 1955.

\bibitem{KK:AJM}
R.~V. Kadison and D.~Kastler.
\newblock Perturbations of von {N}eumann algebras. {I}. {S}tability of type.
\newblock {\em Amer. J. Math.}, 94:38--54, 1972.

\bibitem{KRin:Book2}
R.~V. Kadison and J.~R. Ringrose.
\newblock {\em Fundamentals of the theory of operator algebras. {V}ol. {II}},
  volume~16 of {\em Graduate Studies in Mathematics}.
\newblock American Mathematical Society, Providence, RI, 1997.
\newblock Advanced theory, Corrected reprint of the 1986 original.

\bibitem{Kh:JOT}
M.~Khoshkam.
\newblock Perturbations of {$C^*$}-algebras and {$K$}-theory.
\newblock {\em J. Operator Theory}, 12(1):89--99, 1984.

\bibitem{NT:PJA}
M.~Nakamura and Z.~Takeda.
\newblock On some elementary properties of the crossed products of von
  {N}eumann algebras.
\newblock {\em Proc. Japan Acad.}, 34:489--494, 1958.

\bibitem{O:Acta}
N.~Ozawa.
\newblock Solid von {N}eumann algebras.
\newblock {\em Acta Math.}, 192(1):111--117, 2004.

\bibitem{OP:Ann}
N.~Ozawa and S.~Popa.
\newblock On a class of {${\rm II}_1$} factors with at most one {C}artan
  subalgebra.
\newblock {\em Ann. of Math. (2)}, 172(1):713--749, 2010.

\bibitem{OP:AJM}
N.~Ozawa and S.~Popa.
\newblock On a class of {${\rm II}_1$} factors with at most one {C}artan
  subalgebra, {II}.
\newblock {\em Amer. J. Math.}, 132(3):841--866, 2010.

\bibitem{PTWW:APDE}
F.~Perera, A.~Toms, S.~White, and W.~Winter.
\newblock The {C}untz semigroup and stability of close {$C^\ast$}-algebras.
\newblock {\em Anal. PDE}, 7(4):929--952, 2014.

\bibitem{P:IUMJ}
J.~Phillips.
\newblock Perturbations of {$C^{\ast} $}-algebras.
\newblock {\em Indiana Univ. Math. J.}, 23:1167--1176, 1973/74.

\bibitem{Pi:IJM}
G.~Pisier.
\newblock Remarks on the similarity degree of an operator algebra.
\newblock {\em Internat. J. Math.}, 12(4):403--414, 2001.

\bibitem{P:Invent}
S.~Popa.
\newblock On a problem of {R}. {V}. {K}adison on maximal abelian {$\ast
  $}-subalgebras in factors.
\newblock {\em Invent. Math.}, 65(2):269--281, 1981/82.

\bibitem{P:Adv}
S.~Popa.
\newblock Maximal injective subalgebras in factors associated with free groups.
\newblock {\em Adv. in Math.}, 50(1):27--48, 1983.

\bibitem{PV:Acta}
S.~Popa and S.~Vaes.
\newblock Unique {C}artan decomposition for {$\rm II_1$} factors arising from
  arbitrary actions of free groups.
\newblock {\em Acta Math.}, 212(1):141--198, 2014.

\bibitem{PV:Crelle}
S.~Popa and S.~Vaes.
\newblock Unique {C}artan decomposition for {$\rm II_{1}$} factors arising from
  arbitrary actions of hyperbolic groups.
\newblock {\em J. Reine Angew. Math.}, 694:215--239, 2014.

\bibitem{P:CJM}
L.~Puk{\'a}nszky.
\newblock On maximal abelian subrings of factors of type {${\rm II}_{1}$}.
\newblock {\em Canad. J. Math.}, 12:289--296, 1960.

\bibitem{R:PJM}
F.~R{\u{a}}dulescu.
\newblock Singularity of the radial subalgebra of {${\mathcal L}(F_N)$} and the
  {P}uk\'anszky invariant.
\newblock {\em Pacific J. Math.}, 151(2):297--306, 1991.

\bibitem{RT:JFA}
I.~Raeburn and J.~L. Taylor.
\newblock Hochschild cohomology and perturbations of {B}anach algebras.
\newblock {\em J. Funct. Anal.}, 25(3):258--266, 1977.

\bibitem{SS:Book2}
A.~M. Sinclair and R.~R. Smith.
\newblock {\em Finite von {N}eumann algebras and masas}, volume 351 of {\em
  London Mathematical Society Lecture Note Series}.
\newblock Cambridge University Press, Cambridge, 2008.

\bibitem{SV:arXiv}
A.~Speelman and S.~Vaes.
\newblock A class of {II$_1$} factors with many non conjugate {C}artan
  subalgebras.
\newblock {\em Adv. in Math.}, 231:2224--2251, 2012.

\bibitem{Tak1}
M.~Takesaki.
\newblock {\em Theory of operator algebras. {I}}.
\newblock Springer-Verlag, New York-Heidelberg, 1979.

\bibitem{Tak:Book3}
M.~Takesaki.
\newblock {\em Theory of operator algebras. {III}}, volume 127 of {\em
  Encyclopaedia of Mathematical Sciences}.
\newblock Springer-Verlag, Berlin, 2003.
\newblock Operator Algebras and Non-commutative Geometry, 8.

\end{thebibliography}
\end{document}